\newtheorem{theorem}{Theorem}[section]
\newtheorem{lemma}[theorem]{Lemma}
\newtheorem{corollary}[theorem]{Corollary}
\theoremstyle{definition}
\newtheorem{example}[theorem]{Example}
\theoremstyle{remark}
\numberwithin{equation}{section}
\begin{document}

\title[On Grothendieck type duality]
{On Grothendieck type duality 
for the space of holomorphic functions of several variables}

\author[Yu. Khoryakova]{Yulia Khoryakova}
\email{ykhoryakova@sfu-kras.ru}

\author[A. Shlapunov]{Alexander Shlapunov}
\email{ashlapunov@sfu-kras.ru}

\address{Siberian Federal University,
         Institute of Mathematics and Computer Science,
         pr. Svobodnyi 79,
         660041 Krasnoyarsk,
         Russia}

\subjclass [2010] {Primary 32A10; Secondary 46E10, 32C37}

\keywords{duality, space of holomorphic functions, several complex variables}

\begin{abstract}
We describe the strong dual space $({\mathcal O} (D))^*$ for the space ${\mathcal O} (D)$ of 
holomorphic functions of several complex variables over a bounded Lipschitz domain $D$ with connected 
boundary $\partial D$ (as usual, ${\mathcal O} (D)$ is endowed with the topology of the 
uniform convergence on the compact subsets of $D$). We identify the dual space with a closed 
subspace of the space of harmonic functions on the closed set ${\mathbb C}^n\setminus D$, 
$n>1$, with elements vanishing at the infinity and satisfying the 
tangential Cauchy-Riemann equations on $\partial D$. In particular, 
we extend in a way the classical  Grothendieck-K{\"o}the-Sebasti\~{a}o e Silva 
duality for the space of holomorphic functions of one complex variable 
to  the multi-dimensional  situation. We use the Bochner-Martinelli kernel ${\mathfrak U}_n$ 
in ${\mathbb C}^n$, $n>1$, instead of the Cauchy kernel over the complex plane ${\mathbb C}$
and we prove that the duality holds true if and only if the space ${\mathcal O} (D)\cap 
H^1 (D)$ of the Sobolev holomorphic functions over $D$ is dense in  ${\mathcal O} (D)$. 
\end{abstract}

\maketitle

\section*{Introduction}
\label{s.0}

One of the first dualities in the spaces of holomprphic functions was discovered in 
1950-'s independently by A.Grothendieck \cite{Grot2}, 
G. K{\"o}the  \cite{Koth2} and  J. Sebasti\~{a}o e Silva \cite{Silva}, 
who described the strong dual  $({\mathcal O} (D))^*$ for the space of holomorphic functions 
${\mathcal O} (D)$ (endowed with the standard Frech\'et topology) 
in a bounded simply connected domain $D\subset {\mathbb C}$:
\begin{equation} \label{eq.dual.compl}
({\mathcal O} (D))^* \cong {\mathcal O} (\hat {\mathbb C} \setminus D)
\end{equation} 
where ${\mathcal O} (\hat {\mathbb C} \setminus D)$ is the space of holomorphic 
functions on neighborhoods of the  closed set ${\mathbb C} \setminus D$, vanishing at 
the infinity, endowed with the standard  inductive limit topology. 
Unfortunately, the Hartogs' Theorem on the absence of compact singularities and 
the classical Liouville Theorem do not allow direct analogues of this duality for holomorphic 
functions of several complex variables (though, some generalizations in this direction 
were done for linearly convex multidimensional domains, see \cite{Aize3}, \cite{Mart1}). 

Alternative dualities were independently obtained by L.A.~Aizenberg and 
S.~G. Gindikin \cite{AizeGind1}, L.A.~Aizenberg and B.S.~Mityagin \cite{AizeMit} 
 and E.L.~Stout \cite{Stou1}: 
\begin{equation} \label{eq.dual.self}
({\mathcal O} (D) )^*\cong {\mathcal O} (\overline D)
\end{equation} 
if $D \subset {\mathbb C}^n$ is a bounded domain with the real analytic (pseudo-convex 
for $n>1$) boundary; actually different pairings were used at the core of their dualities 
(namely the ones generated by the inner products in the Bergmann space and Hardy space, 
respectively, cf. \cite{NaciShlaTark1}, \cite{Zorn1} for other possibilities). 

Many generalizations of the mentioned above dualities came from the theory of the Dolbeault 
co-homologies, see  \cite{Mart2}, \cite{Se}, and the theory of elliptic systems 
of partial differential equations, see  \cite{Bla}, \cite{Grot1}, \cite{NaciShlaTark1}, 
\cite{NakSa}, \cite{Nap}, \cite{ShDualSMJ}, \cite{ShTaDBRK}. 

One of the most general results, describing the duality for the spaces 
of solutions to elliptic differential operators with the topology of uniform 
convergence on compact sets,  belong to A. Grothendieck, see  \cite[Theorems 3 and 4]{Grot1}; 
it is similar in a way to \eqref{eq.dual.compl}. Unfortunately, the full description of the 
dual space was obtained for elliptic operators admitting bilateral regular fundamental 
solutions, see \cite[Theorem 4]{Grot1} that does not applicable to overdetermined elliptic 
operators like the multi-dimensional Cauchy-Riemann operator. Though 
\cite[Theorem 3]{Grot1} is still applicable for spaces of solutions to overdetermined 
elliptic operators  admitting left 
regular fundamental solutions, it gives the answer in terms of solutions to related 
underdetermined adjoint operators. The last type of spaces is too large to provide a 
topological isomorphism between the corresponding spaces (the theorem defines a linear 
continuous surjective mapping, only).
 
Another general scheme of producing dualities for (both determined and overdetermined) 
elliptic systems  was presented in \cite{ShTaDBRK}. It involves the concept of Hilbert space 
with reproducing kernel and the constructed pairings are closely related to the inner 
products of the used Hilbert spaces. However the application of the scheme  depends on the 
very subtle information regarding  the properties of the reproducing kernel that is not 
always at hands. 

To formulate the main result of the present paper,  let $D$ be a bounded domain in  
${\mathbb R}^{2n}$, $n>1$, and let ${\mathcal H} (\hat {\mathbb R}^{2n} \setminus  D)$ be the 
space of harmonic complex valued functions on the closed  set ${\mathbb R}^{2n} \setminus  D$, 
regular at the infinity, i.e. such that 
\begin{equation}
\label{eq.zero.infty}
\lim_{|x|\to + \infty}|u(x)| = 0.
\end{equation}
We endow the space with the standard inductive limit topology of harmonic functions
 on closed sets. 

Now let $\Sigma ( \hat {\mathbb C}^n \setminus  D)$ stand 
for  the closed subspace of the space ${\mathcal H} (\hat {\mathbb R}^{2n} \setminus  D)$
of functions $v \in {\mathcal H} (\hat {\mathbb R}^{2n} \setminus  D)$ 
satisfying the tangential Cauchy-Riemann equations on $\partial D$. 
For alternative descriptions of the space $\Sigma ( \hat {\mathbb C}^n \setminus  D)$ 
see Corollary \ref{c.Sigma} below.

\begin{theorem} \label{t.dual.compl.over.G}
Let $n>  1$ and let $D$ be a bounded domain in ${\mathbb C}^n$ with connected Lipschitz 
boundary, such that  the space $H^1 (D) \cap {\mathcal O} (D) $ of  holomorphic functions 
in $D$ from the Sobolev class $H^1 (D)$  is dense in ${\mathcal O} (D) $. 
Then (topologically)
\begin{equation} \label{eq.dual.A.compl.over.G}
({\mathcal O} (D))^* \cong \Sigma ( \hat {\mathbb C}^n \setminus  D).
\end{equation}
\end{theorem}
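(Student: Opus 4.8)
The plan is to construct the duality pairing via the Bochner-Martinelli integral and to verify that it is a topological isomorphism in three stages: first define the map, then prove injectivity and surjectivity onto $\Sigma(\hat{\mathbb C}^n\setminus D)$, then show that the inverse is continuous. For a functional $F\in({\mathcal O}(D))^*$, the topology of uniform convergence on compacta means $F$ is carried by some compact $K\subset D$, so by standard representations $F$ factors through integration against a distribution (or a measure) supported in $D$; alternatively, using density of $H^1(D)\cap{\mathcal O}(D)$, one may first realize $F$ as a continuous functional on that Hilbert-type space. The key formula is: for $z\in{\mathbb C}^n\setminus D$ the function $\zeta\mapsto {\mathfrak U}_n(\zeta,z)$ is a smooth closed form near $\overline D$ in the relevant bidegree, so $(\widehat F)(z):=F_\zeta[{\mathfrak U}_n(\zeta,z)]$ is well-defined. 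Since ${\mathfrak U}_n$ is harmonic (the Bochner-Martinelli kernel has harmonic coefficients away from the diagonal) and regular at infinity in $z$, the function $\widehat F$ lies in ${\mathcal H}(\hat{\mathbb R}^{2n}\setminus D)$; one must then check it satisfies the tangential Cauchy-Riemann equations on $\partial D$ — this should follow from the fact that ${\mathfrak U}_n(\zeta,z)$, as a function of $z$, is annihilated by the tangential part of $\bar\partial_z$ on $\partial D$, which is precisely the defining structural property that forces the image to land in $\Sigma(\hat{\mathbb C}^n\setminus D)$.

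Next I would establish injectivity. If $\widehat F\equiv 0$ on ${\mathbb C}^n\setminus D$, then in particular its boundary values and jump across $\partial D$ vanish; combining the Bochner-Martinelli representation formula for holomorphic $f\in{\mathcal O}(D)\cap H^1(D)$,
\[
f(z)=\int_{\partial D} f(\zeta)\,{\mathfrak U}_n(\zeta,z),\qquad z\in D,
\]
with the vanishing of the exterior integral $\int_{\partial D} f(\zeta)\,{\mathfrak U}_n(\zeta,z)=0$ for $z\notin\overline D$, and a Stokes/Green-type argument relating $F$ to a boundary integral, one deduces $F(f)=0$ for all $f$ in the dense subspace $H^1(D)\cap{\mathcal O}(D)$, hence $F=0$. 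For surjectivity, given $v\in\Sigma(\hat{\mathbb C}^n\setminus D)$, one defines the candidate functional by a Green-type bilinear form: pair $v$ with $f\in{\mathcal O}(D)$ through a boundary integral of Bochner-Martinelli type (using the fact that $v$ is harmonic outside $D$ and regular at infinity to justify contour deformation), check it is continuous on ${\mathcal O}(D)$, and verify that applying $\widehat{(\cdot)}$ recovers $v$ — here regularity at infinity and the tangential Cauchy-Riemann condition are exactly what is needed for the reconstruction formula to close up.

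The continuity of both the map $F\mapsto\widehat F$ and its inverse requires matching the inductive-limit topology on $\Sigma(\hat{\mathbb C}^n\setminus D)$ (harmonic functions on shrinking closed neighborhoods) with the strong dual topology on $({\mathcal O}(D))^*$; this is a fairly standard Mittag-Leffler / open mapping argument for Fréchet-Schwartz spaces and their duals, once the algebraic isomorphism is in place, but it should be stated carefully, invoking that ${\mathcal O}(D)$ is a reflexive Fréchet-Schwartz space so that $({\mathcal O}(D))^*$ is a DF-space and bijective continuous linear maps between such spaces are isomorphisms.

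I expect the main obstacle to be the \emph{surjectivity} step, specifically showing that every $v\in\Sigma(\hat{\mathbb C}^n\setminus D)$ actually arises as $\widehat F$ for some functional — this is where the hypothesis that $H^1(D)\cap{\mathcal O}(D)$ is dense in ${\mathcal O}(D)$ must be used essentially (indeed the abstract says the duality holds \emph{if and only if} this density holds). The difficulty is that the Bochner-Martinelli kernel, unlike the Cauchy kernel in one variable, is not holomorphic in $z$, so the naive boundary pairing of $v$ against $f$ need not vanish for the ``wrong'' components; one must exploit the tangential Cauchy-Riemann equations on $\partial D$ to kill the extra terms, and then a Hahn-Banach extension argument from the dense Sobolev subspace to all of ${\mathcal O}(D)$ produces the functional. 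Controlling this extension — ensuring the extended functional still has the correct Bochner-Martinelli transform — is the technical heart of the argument, and I anticipate it will lean on a careful analysis of the boundary behaviour of Sobolev holomorphic functions and a Green's formula adapted to the Bochner-Martinelli operator.
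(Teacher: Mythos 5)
Your overall architecture (a Bochner--Martinelli/Green-type pairing, injectivity from the representation formula plus the density hypothesis, surjectivity as the hard step, and a closed-graph/DF-space argument for continuity of the inverse) matches the paper's, and you correctly locate the difficulty in surjectivity. But two of your key steps fail as stated. First, the ``transform'' $\widehat F(z)=F_\zeta[{\mathfrak U}_n(\zeta,z)]$ is not well-defined for $n>1$: the coefficients of ${\mathfrak U}_n(\cdot,z)$ are harmonic but \emph{not holomorphic} in $\zeta$, so a functional defined only on ${\mathcal O}(D)$ cannot be applied to them. One must first extend $F$ by Hahn--Banach to $C(D)$ and represent it by a measure $\mu$ with compact support $K\Subset D$; the resulting potential $\hat v(\zeta)=\int_K\Phi_{2n}(\zeta,z)\,d\overline\mu(z)$ then depends on the non-unique extension. (In one variable this issue is invisible because the Cauchy kernel is holomorphic in $\zeta$ for $\zeta$ away from the pole, which is precisely why the classical Grothendieck--K\"othe argument does not transfer verbatim.)

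Second, and more seriously, your claim that $\widehat F$ automatically satisfies the tangential Cauchy--Riemann equations on $\partial D$ because of a ``structural property'' of the kernel is false: the Newtonian potential $\hat v$ of the representing measure is harmonic off $K$ and decays at infinity, but it is generically \emph{not} in $\Sigma(\hat{\mathbb C}^n\setminus D)$. For the ball, $\Sigma$ is spanned by the exterior harmonic extensions of the holomorphic monomials $z^p$ only (see the example after Corollary \ref{c.Sigma}), whereas the spherical-harmonic expansion of $|\zeta-z|^{2-2n}$ contains the mixed harmonics $z^p\overline z^{q}$ with $q\neq 0$. The paper explicitly flags this (``we can not grant easily that $\hat v\in\Sigma$'') and resolves it by replacing $\hat v$ with $v=\tilde{\mathcal P}_D\Pi_D{\mathcal P}_D\hat v$, where $\Pi_D$ is the orthogonal projection onto $H^1(D)\cap{\mathcal O}(D)$ with respect to the Romanov Hermitian form $h_D$ of Lemma \ref{l.NaSh}; it then proves, via a two-domain spectral construction with $G\Subset D$, the compactness of the embedding $H^1(D)\cap{\mathcal H}(D)\to H^1(G)\cap{\mathcal H}(G)$ and a Mergelyan-type density result, that this corrected $v$ is still harmonic on a full \emph{neighbourhood} of ${\mathbb C}^n\setminus D$, vanishes at infinity, and represents the same functional. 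The density of $H^1(D)\cap{\mathcal O}(D)$ in ${\mathcal O}(D)$ is used precisely to push the boundary integral from $\partial G$ to $\partial D$ and to show that replacing $\hat v$ by its projection does not change $f$. None of this correction mechanism appears in your proposal, so the surjectivity step is genuinely incomplete.
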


The pairing, related to the duality \eqref{eq.dual.A.compl.over.G}
will be described in section \S \ref{s.proof}, see \eqref{eq.pair.A.compl.NShT}. 
We note also that for a bounded domain $D \Subset {\mathbb C}^n$ with real analytic 
boundary, such that the space ${\mathcal O}(\overline D)$  is dense  in ${\mathcal O}(D)$, 
Theorem \ref{t.dual.compl.over.G} can be extracted from the results of \cite{NaciShlaTark1}, 
where the duality \eqref{eq.dual.self} was proved for a very special pairing.

Of course, as $\mathcal O(\overline D) \subset H^1 (D) \cap {\mathcal O} (D)$, 
the famous Oka-Weil theorem implies that the approximation property, assumed 
in Theorem \ref{t.dual.compl.over.G}, is always fulfilled   
for strictly pseudo-convex domains in ${\mathbb C}^n$, $n>1$,  
see, for instance, \cite[Ch. 1, \S\S F, G]{GuRo}. 
We prove in \S \ref{s.misc} that the approximation property is also 
necessary for  the duality presented in Theorem \ref{t.dual.compl.over.G} to be true. 
It is also worth to note that the original duality \eqref{eq.dual.compl} for holomorphic 
functions of one variable does not need any restrictions on the smoothness of the curve 
$\partial D$ or on  convexity of the domain $D$.

\section{Preliminaries} 
\label{s.1}

Let ${\mathbb R}^n$, $n\geq 2$, be the Euclidean space with the coordinates 
$x=(x_1,x_2 , \dots x_n)$ and let ${\mathbb C}^n \cong {\mathbb R}^{2n}$ be the related 
$n$-dimensional complex space with the coordinates $z=(z_1,z_2 , \dots z_n)$, 
$z_j = x_j + \iota \, x_{n+j}$,  with the imaginary unit $\iota$.  
Let also  $D$ stand for a bounded domain (open connected set) 
in ${\mathbb C}^n$ with Lipschitz boundary $\partial D$. 
We consider complex valued functions over subsets of ${\mathbb C}^n$. 
As usual, for $s \in {\mathbb Z}_+$,  we use the notations 
$C^s (D)$ and $C^s (\overline D)$ for the spaces  
of $s$-times continuously differentiable functions over $D$ and $\overline D$,  
respectively. The Lebesgue and the Sobolev Hilbert spaces over $D$ will be denoted by 
$L^2 (D)$ and $H^s (D)$, respectively. Let also $H^{s} (\partial D)$, $0<s<1$, stand for the 
standard Sobolev-Slobodetskii spaces over $\partial D$.

Given any open set $U$ in ${\mathbb R}^n$, let   ${\mathcal H} (U)$
stand for the space of harmonic functions in $U$ with the
topology of uniform convergence on compact subsets of $U$. 
It is known that ${\mathcal H} (U)$ is a Fr\'{e}chet-Schwartz topological 
vector space ($F$-space),  see, for instance, 
\cite[Ch. II, \S 4]{Shaef}. Moreover, the a priori estimates for solutions to 
elliptic equations, see, for instance, \cite{GiTru83}, mean that ${\mathcal H} (U) \subset C^
\infty (U)$  is a closed subspace of the space  $C (U)$, i.e.  the topology of the space 
can be defined   both by a system of semi-norms $p_\nu (u) = \max_{x\in K_\nu} 
| u (x)|$ related to an increasing system of compact sets $\{ K_{\nu} \} \subset U$ 
satisfying $\cup_{\nu} K_\nu = U$, and a  system of semi-norms $p^{(\alpha)}_\nu (u) = 
\max\limits_{z\in K_\nu }  | \partial ^\alpha u (z)|$, $\alpha \in {\mathbb Z}^{n}_+$, 
additionally assuming that each $K_\nu$ is the closure of an open set in $U$. 

Any holomorphic function over an open set $U\subset {\mathbb C}^n$ is harmonic and we 
consider the space ${\mathcal O} (U)$ of holomorphic functions in $U$ as a closed
subspace of ${\mathcal H} (U)$.

Next, for a closed set $\sigma \subset {\mathbb R}^n$  denote by ${\mathcal H} (\sigma)$ 
the set of harmonic functions on various neighbourhoods of $\sigma$
depending on the function. Actually  ${\mathcal H} (\sigma)$ can be considered as the
space of (equivalence classes) of harmonic functions on $\sigma$; 
two such solutions are equivalent if there is a neighbourhood of $\sigma$ where
they are equal. In ${\mathcal H} (\sigma)$, a sequence $\{ u_{\nu} \}$ is said to converge if
there exists a neighbourhood $\mathcal V$ of $\sigma$ such that all the functions are
defined at least in $\mathcal V$ and converge uniformly on compact subsets of $\mathcal V$.
Alternatively the topological space ${\mathcal H} (\sigma)$ can be described as the inductive
limit of spaces ${\mathcal H} (U_{\nu})$, where $\{ U_{\nu} \}$ is any decreasing
sequence of open sets containing $\sigma$, such that each neighbourhood of
$\sigma$ contains some $U_{\nu}$, and such that each connected component of
each $U_{\nu}$ intersects $\sigma$. 
Thus, ${\mathcal H} (U_{\nu}) \rightarrow {\mathcal H} (\sigma)$
are one-to-one. Then ${\mathcal H} (\sigma)$ is necessarily a Hausdorff space. 
Actually, ${\mathcal H} (\sigma)$ is the so called $DF$-space, 
see \cite[Ch. II, \S 6]{Shaef}.

For a bounded domain $D \subset {\mathbb R}^{2n}$ 
we define the space ${\mathcal H} (\hat {\mathbb R}^{2n} \setminus D)$, $n>1$, as a closed 
subspace of  ${\mathcal H} ({\mathbb R}^{2n} \setminus D)$ satisfying \eqref{eq.zero.infty}.

Next, we recall that a function $w_0\in L^1 (\partial D)$ is called a $CR$-function on 
the hyper-surface $\partial D$ if it  satisfies the weak tangential 
Cauchy-Riemann equations on $\partial D$, i.e. 
\begin{equation} \label{eq.CR.weak}
\int_{\partial D} w_0 \, \overline \partial \psi = 0  
\end{equation}
for all $(n,n-2)$-differential forms $\psi$ with the coefficients of the class 
$C^1 (\overline D)$, 
see, for instance, \cite[Ch. 2, \S 6]{Ky}.  Then the space 
$\Sigma (\hat {\mathbb C}^n \setminus D) $, defined above 
as the set of elements $v \in {\mathcal H} (\hat {\mathbb R}^{2n} \setminus  D)$ 
satisfying the tangential Cauchy-Riemann equations on $\partial D$,  is a closed subspace 
of the space ${\mathcal H} (\hat {\mathbb C}^n \setminus D)$  because any sequence, 
converging in ${\mathcal H} (\hat {\mathbb C}^n \setminus D)$, converges uniformly 
on the compact set $\partial D \subset {\mathbb C}^n$. 

Let us give a characterization of the space
$\Sigma (\hat {\mathbb C}^n \setminus D) $ for Lipschitz domains. 

With this purpose, denote by $\Delta_n $  the usual Laplace operator $\sum_{j=1}^n \frac{\partial^2}{\partial x _j^2}  $ in the Euclidean space ${\mathbb R}^n$.  
It is well-known, that the Laplace operator admits a fundamental solution $\Phi_{n}$ of the 
convolution type:
\begin{equation*} 
\Phi_{n} (y-x) = 
\left\{ 
\begin{array}{lll}
\frac{|y-x|^{2-n}}{(2-n)\sigma_n} &  n\geq 3,\\
\frac{\ln{|y-x|}}{2\pi} &  n = 2,\\
\end{array}
\right.
\end{equation*}
where $\sigma_n$ is the square of the unit sphere in ${\mathbb R}^n$. 

Let $\overline \partial$ denote the Cauchy-Riemann operator in ${\mathbb C}^n$, 
i.e. it is $n$-column with the components 
$\overline \partial_j  = 
\frac{1}{2}\Big(\frac{\partial }{\partial x_j} + \iota \frac{\partial }{\partial x_{j+n}}
\Big)$.  

Next, denote by 
$$
{\mathfrak U}_n (z,\zeta) = \frac{(n-1)}{(2\pi \iota)^n} \sum_{j=1}^n
\frac{(-1)^{j-1}(\overline \zeta_j - \overline z_j)}
{|\zeta-z|^{2n}} d\overline \zeta[j] \wedge 
d\zeta, \, z,\zeta \in {\mathbb C}^n, \, z\ne \zeta ,
$$ 
the Bochner-Martinelli kernel in the complex space ${\mathbb C}^n$,  see, for instance, 
\cite{Ky}. As it is known the Bochner-Martinelli kernel ${\mathfrak U}_n$ can be presented 
as 
\begin{equation} \label{eq.MB.*}
{\mathfrak U}_n (\zeta,z)  = \sum_{j=1}^n \Big( \overline \partial ^*_{j,\zeta} \Phi_{2n} 
(\zeta,z)\Big)(-1)^{j-1}d\overline \zeta [j] \wedge d\zeta 
\end{equation}
where $\zeta = (\zeta_1, \dots \zeta_n )$, $\zeta _j= y_j + \iota y_{j+n}$, and 
$\overline \partial_j ^* = 
\frac{1}{2}\Big(\frac{\partial }{\partial y_j} - \iota \frac{\partial }{\partial y_{j+n}}
\Big)$ are the components  of the formal adjoint operator  
$\overline \partial ^* = (\overline \partial_1 ^*, \dots \overline \partial_n ^*)$ 
for the Cauchy-Riemann operator $\overline \partial$, see \cite[\S 1]{Ky}. 
In particular, the Bochner-Martinelli kernel is harmonic with respect to 
$\zeta$ if $\zeta \ne z$ and $n>1$. Of course, for $n=1$ the kernel 
${\mathfrak U}_n (\zeta,z)$ coincides with the Cauchy kernel ${\mathfrak K} (\zeta,z)= 
\frac{1}{2\pi \iota} \frac{1}{\zeta-z}$; in this case it is holomorphic with respect to 
$\zeta$ if $\zeta \ne z$. 

Clearly, if $D$ is a domain with Lipschitz boundary then 
given a sufficiently regular function $u_0$  on $\partial D$, denote by 
$$
M_{\partial D} u_0 (z) = \int_{\partial D} {\mathfrak U}_n (z,\zeta)  u_0 (\zeta), 
\, z \not \in \partial D,
$$ 
its Bochner-Martinelli integral. Of course, the Bochner-Martinelli integral 
$M_{\partial D}u_0 (z)$ is well-defined for any $u_0 \in H^{1/2} (\partial D)$ as a parameter 
dependent integral if $z \not \in \partial D$. We denote by $M^-_{\partial D}u_0 $ its 
restriction to $D$ and by $M^+_{\partial D}u_0$ its restriction to ${\mathbb C}^n 
\setminus \overline D$. Actually it is a version 
of the double layer potential and so it induces the continuous linear operator 
\begin{equation} \label{eq.M-.cont}
M_{\partial D}^-:  H^{1/2} (\partial D) \to H^1 (D),
\end{equation} 
see, for instance, \cite[\S 16]{Ky}, 
\cite[\S 2.3.2.5 ]{ReSh}, for smooth domains or \cite{Cost88} for Lipschitz domains.
On the other hand, for any Lipschitz domain  $G \subset {\mathbb C}^n$ containing $\overline D$ the same arguments imply that 
the Bochner-Martinelli integral $M_{\partial D}$ induces continuous linear mappings  
$$
M^+_ {\partial  D} : H^{1/2} (\partial D) \to H^1 (G \setminus \overline D) \cap {\mathcal H} (G \setminus \overline D) .
$$ 
Next, by the structure of the kernel ${\mathfrak U}_n$ we have  
\begin{equation} \label{eq.M.infty}
|M^+_ {\partial  D} u_0 (z)| \leq c (\partial D) \|u_0\|_{H^{1/2} (\partial D) } |z|^{1-2n}
\end{equation}
and, therefore $M_{\partial D}$   induces 
continuous linear mapping  
\begin{equation} \label{eq.M+.cont}
M^+_ {\partial  D} : H^{1/2} (\partial D) \to H^{1}_{\rm loc} 
({\mathbb C}^n \setminus D) \cap {\mathcal H} (\hat {\mathbb C}^n \setminus \overline D) .
\end{equation}

In the following statement ${\rm t^-}$ and ${\rm t^+}$ stand 
for the continuous trace mappings
$$
{\rm t^-}: H^1 (D) \to H^{1/2} (\partial D) , \, 
{\rm t}^+: H^{1}_{\rm loc} ({\mathbb C}^n \setminus D)  \to H^{1/2} (\partial D). 
$$ 

The following statement can be easily extracted from  
\cite[Theorem 7.1 and Corollary 15.5]{Ky} for domains smooth boundaries. 

\begin{theorem} \label{t.Sigma}
Let $D\subset {\mathbb C}^n$  be a bounded Lipschitz domain.  
Then the following conditions are equivalent:
\begin{itemize}
\item[(1)]
The function $w_0\in H^{1/2} (\partial D)$ is a $CR$-function on $\partial D$;
\item[(2)]
there is a function $w \in H^1 (D) \cap {\mathcal O} (D)$ satisfying ${\rm t}^-(w)= 
w_0 $ on $\partial D$; 
\item[(3)]
$M^{+}_{\partial D}  w_0 \equiv 
 0$ in ${\mathbb C}^n \setminus \overline D$.
\end{itemize}
\end{theorem}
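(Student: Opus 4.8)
\smallskip

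The plan is to obtain Theorem~\ref{t.Sigma} from its analogue for $C^\infty$ boundaries --- where $(1)\Leftrightarrow(2)$ is the Hartogs--Bochner extension theorem \cite[Theorem 7.1]{Ky} and $(2)\Leftrightarrow(3)$ is the converse Bochner--Martinelli theorem \cite[Corollary 15.5]{Ky} --- and then to relax both the boundary smoothness and the regularity of the data to the Lipschitz and $H^{1/2}(\partial D)$ levels. Three facts will be used repeatedly: the continuity of $M^{\pm}_{\partial D}$ recorded in \eqref{eq.M-.cont}, \eqref{eq.M+.cont}, \eqref{eq.M.infty}; the jump (Sokhotski--Plemelj) relation ${\rm t}^-(M^-_{\partial D}w_0)-{\rm t}^+(M^+_{\partial D}w_0)=w_0$ on $\partial D$, which holds for $w_0\in H^{1/2}(\partial D)$ over a Lipschitz domain by density of $C^1(\partial D)$ in $H^{1/2}(\partial D)$ together with those estimates and the Lipschitz layer-potential calculus of \cite{Cost88}; and the algebraic identity following from \eqref{eq.MB.*} and $\Delta_{2n}\Phi_{2n}=0$ off the diagonal that, for each $k$, produces an $(n,n-2)$-form $\eta_k(z,\cdot)$, smooth in $\zeta$ on ${\mathbb C}^n\setminus\{z\}$, with $\overline\partial_\zeta\eta_k(z,\zeta)$ equal to the $z$-derivative $\overline\partial_{z,k}{\mathfrak U}_n(z,\zeta)$ for $\zeta\neq z$, whence $\overline\partial_{z,k}M_{\partial D}w_0(z)=\int_{\partial D}w_0(\zeta)\,\overline\partial_\zeta\eta_k(z,\zeta)$ for $z\notin\partial D$.

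Two of the implications go through on a Lipschitz domain with no recourse to the smooth theory. For $(2)\Rightarrow(1)$: if $w\in H^1(D)\cap{\mathcal O}(D)$ and ${\rm t}^-(w)=w_0$, then for every $(n,n-2)$-form $\psi$ with $C^1(\overline D)$-coefficients Stokes' formula on $D$ gives $\int_{\partial D}w_0\,\overline\partial\psi=\int_D\overline\partial(w\,\overline\partial\psi)=\int_D\overline\partial w\wedge\overline\partial\psi=0$, first for $w$ holomorphic near $\overline D$ and then by $H^1(D)$-density; hence $w_0$ is a $CR$-function. For $(1)\Rightarrow(3)$: if $w_0$ is a $CR$-function then, for $z\notin\overline D$, the form $\psi=\eta_k(z,\cdot)$ lies in $C^1(\overline D)$, so \eqref{eq.CR.weak} and the identity above give $\overline\partial_{z,k}M^+_{\partial D}w_0(z)=0$; thus $M^+_{\partial D}w_0\in{\mathcal O}({\mathbb C}^n\setminus\overline D)$. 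As $\partial D$ is connected, ${\mathbb C}^n\setminus\overline D$ is connected, and by \eqref{eq.M.infty} the function $M^+_{\partial D}w_0$ is holomorphic in a neighbourhood of $\infty$ and vanishes there; by Hartogs' theorem ($n>1$) it extends to a bounded entire function, so $M^+_{\partial D}w_0\equiv0$ by Liouville's theorem, i.e.\ $(3)$ holds.

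For $(3)\Rightarrow(2)$ --- the converse Bochner--Martinelli theorem --- set $w:=M^-_{\partial D}w_0\in H^1(D)$ by \eqref{eq.M-.cont}; the jump relation and $(3)$ yield ${\rm t}^-(w)=w_0$, and it remains to prove that $w$ is holomorphic. Since $w$ is harmonic, $g:=\overline\partial w\in L^2(D)$ satisfies $\overline\partial g=0$ and $\overline\partial^{*}g=0$ in $D$; applying the Bochner--Martinelli--Koppelman representation to $w$ on $D$ and substituting ${\rm t}^-(w)=w_0$ and $(3)$ one finds that the volume potential $z\mapsto\int_D g\wedge{\mathfrak U}_n(z,\cdot)$ vanishes identically on ${\mathbb C}^n$; equivalently, $\overline\partial^{*}(g\,\mathbf{1}_D)=0$ in ${\mathcal D}'({\mathbb R}^{2n})$, that is, $g$ lies in the domain of $\overline\partial^{*}$ on $D$ with vanishing complex normal component on $\partial D$. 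The injectivity of the Bochner--Martinelli volume potential on such forms --- which for $C^\infty$ boundaries is the content of \cite[Corollary 15.5]{Ky} --- then forces $g\equiv0$, so $w\in{\mathcal O}(D)$ and $(2)$ follows. To carry this over to Lipschitz $\partial D$ and to $H^{1/2}(\partial D)$-data one establishes the Bochner--Martinelli--Koppelman formula for $H^1(D)$-functions over Lipschitz domains (again via \cite{Cost88}) and uses the estimates behind \eqref{eq.M-.cont}--\eqref{eq.M.infty}.

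The step I expect to be the real obstacle is $(3)\Rightarrow(2)$: securing the converse Bochner--Martinelli theorem at the $H^{1/2}$/Lipschitz level, i.e.\ the representation formula for $H^1(D)$-functions on Lipschitz domains together with the injectivity of the associated volume potential; both are classical for $C^\infty$ boundaries \cite{Ky} but demand the Lipschitz layer-potential machinery and careful control of the approximation. By contrast $(2)\Rightarrow(1)$ and $(1)\Rightarrow(3)$ are essentially elementary on Lipschitz domains, the only multi-dimensional ingredients there being the connectedness of ${\mathbb C}^n\setminus\overline D$ (guaranteed by that of $\partial D$) and the Hartogs--Liouville argument, which is also where the hypotheses $n>1$ and $\partial D$ connected enter.
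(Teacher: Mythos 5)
Your architecture coincides with the paper's (the cycle $(2)\Rightarrow(1)$, $(1)\Rightarrow(3)$, $(3)\Rightarrow(2)$, with $M^-_{\partial D}w_0$ as the candidate holomorphic extension), and two of the three implications are sound. The implication $(2)\Rightarrow(1)$ is exactly the paper's Stokes-formula argument. For $(1)\Rightarrow(3)$ you take a genuinely different but correct route: you differentiate under the integral sign and use the Koppelman-type identity $\overline\partial_{z,k}{\mathfrak U}_n(z,\cdot)=\overline\partial_\zeta\eta_k(z,\cdot)$ to conclude from \eqref{eq.CR.weak} that $M^+_{\partial D}w_0$ is holomorphic on all of ${\mathbb C}^n\setminus\overline D$, then finish with Hartogs and Liouville; the paper instead solves $\overline\partial_\zeta\psi_z={\mathfrak U}_n(\cdot,z)$ on a large ball for $z$ far from $\overline D$, gets $M_{\partial D}w_0(z)=0$ there directly, and finishes by unique continuation of harmonic functions. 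Both variants need ${\mathbb C}^n\setminus\overline D$ connected; you import this from the connectedness of $\partial D$, which is not a hypothesis of this theorem as stated, though the paper's own unique-continuation step quietly needs the same thing.

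The genuine gap is in $(3)\Rightarrow(2)$, precisely the step you flag as ``the real obstacle'' and then do not close. Having reduced matters to showing that $g=\overline\partial\big(M^-_{\partial D}w_0\big)$ vanishes, you invoke ``the injectivity of the Bochner--Martinelli volume potential on such forms'', citing only the $C^\infty$ case \cite[Corollary 15.5]{Ky}; but that injectivity at the Lipschitz/$H^{1/2}$ level is of the same depth as the implication being proved, so the decisive analytic content is deferred rather than established. The paper closes it with a short, self-contained computation. The jump relations for the double layer potential on Lipschitz domains \cite{Cost88} give not only ${\rm t}^-(M^-_{\partial D}w_0)-{\rm t}^+(M^+_{\partial D}w_0)=w_0$ but also that the complex normal derivative $\overline\partial_\nu$ has \emph{no} jump across $\partial D$; hence $(3)$ forces $\overline\partial_\nu\big(M^-_{\partial D}w_0\big)=0$ on $\partial D$. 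Since $w:=M^-_{\partial D}w_0\in H^1(D)$ is harmonic, approximating $w$ in $H^1(D)$ by $w_k\in C^1(\overline D)$ and integrating by parts yields
$$
\sum_{j=1}^n\|\overline\partial_j w\|^2_{L^2(D)}=\lim_{k\to\infty}\int_{\partial D}\overline{w_k}\,\overline\partial_\nu w\,ds=0,
$$
whence $w\in{\mathcal O}(D)$. Your distributional identity $\overline\partial^*(g\,{\mathbf 1}_D)=0$ is equivalent to the pair ``$\Delta_{2n}w=0$ in $D$ and $\overline\partial_\nu w=0$ on $\partial D$'', and testing it against $w$ itself is the missing one-line conclusion; as written, your argument stops just short of it.
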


\begin{proof} The proof is based on the following statement that is well known 
for $C^1 (\partial D)$-functions and $C^1$-smooth domain $D$, see \cite[Corollary 15.5]{Ky}.

\begin{lemma} \label{l.Sigma}
Let $D\subset {\mathbb C}^n$  be a bounded Lipschitz domain. Given $w_0  
\in H^{1/2} (\partial D) $ there is a function $w \in H^1 (D) \cap {\mathcal O} (D)$ 
satisfying ${\rm t}^-(w)= v$ on $\partial D$ if and only if $M^{+}_{\partial D} w_0\equiv 
 0$. 
\end{lemma}

\begin{proof} As the operators $M^{\pm}_{\partial D}$, given by 
\eqref{eq.M-.cont}, \eqref{eq.M+.cont}, are continuous, then 
according to the Bochner-Martinelli formula for the Sobolev 
holomorphic function, see \cite[p. 166]{Ky} we have 
\begin{equation} \label{eq.Green.2}
M_{\partial D} {\rm t^-}w (z) 
=  \left\{
\begin{array}{ll} w(z) & x \in D, \\
0, & z \not \in \overline D, \\
\end{array}
\right.
\end{equation} 
for any $w \in  H^1 (D) \cap {\mathcal O} (D)$. 

In particular, this means that for a function $w_0 \in H^{1/2} (\partial D)$ 
we have $M^{+}_{\partial D} w_0 \equiv 0$ in ${\mathbb C}^n \setminus 
\overline D$ if there is a function $w \in H^1 (D) \cap {\mathcal O} (D)$ satisfying 
${\rm t}^-(w)= w_0$ on $\partial D$. 

Back, pick a function $w_0 \in H^{1/2} (\partial D)$ satisfying 
 $M^{+}_{\partial D} w_0 \equiv 0$ in ${\mathbb C}^n \setminus 
\overline D$. 

Denote by $\overline \partial_\nu$ the so-called 
 "complex normal derivative"{} with respect to $\partial D$ :
$$
\overline \partial_\nu w = \  \sum_{j=1}^n  (\nu _j - \iota \nu_{j+n})
\overline \partial _j w 
$$
where $\nu (z) = (\nu_1 (\zeta), \dots, \nu_{2n} (\zeta))$ is the unit outward normal 
to $\partial D$ at the point $\zeta\in \partial D$, see \cite[p. 39]{Ky}. 
Therefore, by the jump formula for the Martinelli-Bochner integral (for 
the double layer potentials), we have 
$$
 M^-_{\partial D} w_0 - M^+_{\partial D}w_0 =w_0 \mbox{ on } \partial D,
$$ 
$$
\overline \partial_\nu ( M^-_{\partial D} w_0 ) 
- \overline \partial_\nu(M^+_{\partial D} w_0) =0 \mbox{ on } \partial D,
$$ 
see \cite[Corollary 4.9]{Ky} for smooth domains and \cite{Cost88} for Lipschitz domains. 

In particular, as $M^{+}_{\partial D} w_0\equiv 
 0$ in ${\mathbb C}^n \setminus 
\overline D$, we obtain  
$$
{\rm t}^- M^{-}_{\partial D} w_0 =w_0  \mbox{ on }  \partial D, \,\,
\overline \partial_\nu ( M^-_{\partial D} w_0 ) ^- =0 \mbox{ on }  \partial D.
$$ 
By the very construction, the Bocnher-Martinelli integral $M^{-}_{\partial D}w_0$ 
is harmonic in $D$. Moreover \eqref{eq.M-.cont} yields 
$M^{-}_{\partial D} w_0 \in H^1  (D)$ and, thus, the function 
$w=M^{-}_{\partial D}w_0$ is a $H^1(D)$-solution to the Cauchy problem
$$
\left\{
\begin{array}{lll}
\Delta_{2n} w=  0 \mbox{ in }  D, \\ 
w =w_0 \mbox{ on }  \partial D, \\
\overline \partial_\nu w =0 \mbox{ on }  \partial D.
\end{array}
\right.
$$

As $D$ is a Lipschitz domain, then there is a sequence $\{ w_k\}\subset C^1 (\overline D)$ 
approximating $w$ in $H^1 (D)$. Then, integrating by parts, we obtain 
$$
\sum_{j=1}^n \|\overline \partial_j w \|^2 _{L^2 (D)}= \lim_{k \to +\infty}
\sum_{j=1}^n (\overline \partial_j w,\overline \partial_j w_k ) _{L^2 (D)} = 
\lim_{k \to +\infty} 
\int_{\partial D} \overline w_k \overline \partial_\nu w ds (y) =0
$$
because $\overline \partial_\nu w=0$ on $\partial D$. 
 Therefore, $ \overline \partial w =0$ weakly in $D$, and, then, by \cite[\S 24.7]{Vla},
it is holomorphic in $D$, i.e. $w \in H^1 (D) \cap {\mathcal O} (D)$
satisfies ${\rm t}^- w= w_0$ on  $\partial D$, that was to be proved.
\end{proof}

It follows from Lemma \ref{l.Sigma} immediately that 
conditions (2) and (3) of Theorem \ref{t.Sigma} are equivalent. 

Moreover, using Stokes formula, we see that (2) implies (1):
$$
\int_{\partial D} w_0 \, \overline \partial \psi 
= \int_{\partial D} {\rm t}^-w \, \overline \partial \psi
=\int_{D} \Big( \overline \partial  w \wedge \overline \partial \psi + 
 w \, \overline \partial (\overline \partial  \psi) \Big) 
=0
$$ 
for all $(n,n-2)$-differential forms $\psi$ with the coefficients of the class 
$C^1 (\overline D)$. 

Finally, let $w_0\in H^{1/2} (\partial D)$ 
be a $CR$-function on $\partial D$. As $D$ is bounded, there is a ball $B(0,R)$ 
containing $\overline D$. Then for $z \not \in \overline B(0,R)$ the Bochner-Martinelli 
kernel satisfies 
$$
\overline \partial _\zeta {\mathfrak U} (\zeta, z) = 0 \mbox{ in } \overline B(0,R).
$$ 
As the Dolbeault complex is exact on the smooth forms over convex domains, for any 
$z \not \in \overline B(0,R)$ there is a $(n,n-2)$-form $\psi_z (\zeta)$ with 
smooth coefficients in  $\overline B(0,R)$, 
satisfying 
$$
\overline \partial _\zeta \psi_z (\zeta) = 
{\mathfrak U} (\zeta, z)  \mbox{ in } B(0,R),
$$
see \cite{KohnJJ}. Therefore, according to \eqref{eq.CR.weak}, 
$$
(M_{\partial D} w_0)(z) = \int_{\partial D} 
w_0 (\zeta) \, \overline \partial _\zeta \psi_z (\zeta) =0
\mbox{ for all } z \not \in \overline B(0,R). 
$$
In particular, as $(M_{\partial D} w_0)(z)$ is harmonic outside $\overline D$, 
the uniqueness theorem for harmonic functions yields $M^{+}_{\partial D}  w_0 \equiv 
 0$ in ${\mathbb C}^n \setminus \overline D$. Thus, (1) is equivalent to (2) and (3) (cf. 
also \cite[Theorem 7.1]{Ky} for domains with smooth boundaries), that was to be proved.
\end{proof}

\begin{corollary} \label{c.Sigma}
Let $D\subset {\mathbb C}^n$  be a bounded Lipschitz domain.  
Then the following conditions are equivalent:
\begin{itemize}
\item[(1)]
 the element $v\in {\mathcal H} (\hat {\mathbb C}^{n} \setminus D) $ belong to 
 the space $\Sigma (\hat {\mathbb C}^{n} \setminus D) $; 
\item[(2)]
there is a function $w \in H^1 (D) \cap {\mathcal O} (D)$ satisfying ${\rm t}^-(w)= 
{\rm t}^+(v) $ on $\partial D$; 
\item[(3)]
$M^{+}_{\partial D} {\rm t}^+ v \equiv 
 0$ in ${\mathbb C}^n \setminus \overline D$.
\end{itemize}
\end{corollary}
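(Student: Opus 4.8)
The plan is to reduce Corollary \ref{c.Sigma} to Theorem \ref{t.Sigma} via the jump relations for the Bochner--Martinelli double layer potential. The key observation is that for $v \in \mathcal{H}(\hat{\mathbb{C}}^n \setminus D)$ the restriction $v|_{\mathbb{C}^n\setminus\overline D}$ is harmonic, regular at infinity, and has a well-defined trace $w_0 := \mathrm{t}^+(v) \in H^{1/2}(\partial D)$ on $\partial D$; conversely the definition of $\Sigma(\hat{\mathbb{C}}^n\setminus D)$ demands that $v$ satisfy the weak tangential Cauchy--Riemann equations \eqref{eq.CR.weak} on $\partial D$, which by the very formulation means $\mathrm{t}^+(v)$ is a $CR$-function on $\partial D$ in the sense of \eqref{eq.CR.weak}. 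So (1) is equivalent to the statement that $w_0 = \mathrm{t}^+(v)$ satisfies condition (1) of Theorem \ref{t.Sigma}, while (2) and (3) here are literally conditions (2) and (3) of Theorem \ref{t.Sigma} applied to $w_0$. The equivalences (1)$\Leftrightarrow$(2)$\Leftrightarrow$(3) of Corollary \ref{c.Sigma} then follow immediately from Theorem \ref{t.Sigma}.

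First I would spell out why $v\in\mathcal{H}(\hat{\mathbb{C}}^n\setminus D)$ has a trace in $H^{1/2}(\partial D)$: since $v$ is harmonic on an open neighbourhood of the closed set $\mathbb{C}^n\setminus D$ (in particular on a one-sided neighbourhood of $\partial D$ from outside), elliptic regularity gives $v\in C^\infty$ there, so $v|_{\mathbb{C}^n\setminus D}\in H^1_{\mathrm{loc}}(\mathbb{C}^n\setminus D)$ and the trace map $\mathrm{t}^+$ of the excerpt applies, producing $w_0\in H^{1/2}(\partial D)$. Next I would record that the condition "$v$ satisfies the tangential Cauchy--Riemann equations on $\partial D$", by the discussion preceding Theorem \ref{t.Sigma} (where the space $\Sigma(\hat{\mathbb{C}}^n\setminus D)$ is described precisely as the elements whose boundary values satisfy \eqref{eq.CR.weak}), is exactly the assertion that $w_0$ is a $CR$-function on $\partial D$; here one should note that the integral $\int_{\partial D} v\,\overline\partial\psi$ in \eqref{eq.CR.weak} is interpreted via the boundary trace $w_0$, which is legitimate because $w_0 \in H^{1/2}(\partial D) \subset L^1(\partial D)$ and $\overline\partial\psi$ is smooth. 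Thus condition (1) of the Corollary holds if and only if condition (1) of Theorem \ref{t.Sigma} holds for $w_0$.

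Then I would invoke Theorem \ref{t.Sigma} with the function $w_0 = \mathrm{t}^+(v)$: it gives that $w_0$ is a $CR$-function on $\partial D$ iff there is $w\in H^1(D)\cap\mathcal{O}(D)$ with $\mathrm{t}^-(w)=w_0=\mathrm{t}^+(v)$ on $\partial D$, iff $M^+_{\partial D}\,\mathrm{t}^+v \equiv 0$ in $\mathbb{C}^n\setminus\overline D$. Chaining this with the identification of condition (1) of the Corollary established in the previous paragraph yields all three equivalences of Corollary \ref{c.Sigma}.

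The only point requiring a little care — and what I expect to be the main (though minor) obstacle — is the passage between the "intrinsic" description of $\Sigma(\hat{\mathbb{C}}^n\setminus D)$ as elements satisfying the tangential Cauchy--Riemann equations and the "boundary-trace" formulation \eqref{eq.CR.weak}: one must be sure that testing the weak equations against $(n,n-2)$-forms $\psi$ with $C^1(\overline D)$ coefficients (as in \eqref{eq.CR.weak}) is the same as testing against forms smooth near $\partial D$, and that Stokes' theorem on the Lipschitz domain $D$ is available to rewrite $\int_{\partial D} v\,\overline\partial\psi$. This is handled by a density argument (smooth forms are dense among $C^1(\overline D)$ forms) together with the fact that $v$ extends harmonically past $\partial D$, so no distributional subtleties arise. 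Everything else is a direct citation of Theorem \ref{t.Sigma}.
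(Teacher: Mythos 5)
Your reduction is exactly the paper's (implicit) argument: the corollary is stated immediately after Theorem \ref{t.Sigma} with no separate proof, precisely because conditions (1)--(3) are conditions (1)--(3) of that theorem applied to $w_0={\rm t}^+v$, using that $v$ is smooth near $\partial D$ (being harmonic on a neighbourhood of ${\mathbb C}^n\setminus D$) so that its trace lies in $H^{1/2}(\partial D)$ and membership in $\Sigma(\hat{\mathbb C}^n\setminus D)$ is by definition the $CR$-condition \eqref{eq.CR.weak} for that trace. Your proposal is correct and takes essentially the same route.
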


Next, denote by $({\mathcal O} (D))^*$ the dual space of ${\mathcal O} (D)$, i.e., the
space of all continuous linear functionals on ${\mathcal O} (D)$.
As usual, we give $({\mathcal O} (D))^*$ the strong topology, i.e.,
   the topology of the uniform convergence of functionals on bounded subsets of
   ${\mathcal O} (D)$, see \cite[Ch IV, \S 6]{Shaef}.
	
Then for $n=1$ the classical results \cite{Grot2},   \cite{Koth2} , \cite{Silva} for a simply connected bounded 
plane domain $D$, state that duality \eqref{eq.dual.compl} holds true and the 
related pairing 
\begin{equation} \label{eq.pair.CR.compl.map}
\langle \cdot, \cdot \rangle_{1}: {\mathcal O} (D) \times {\mathcal O}  ( \hat {\mathbb C} 
\setminus  D) \to 
\mathbb R
\end{equation} 
is given by the the curvilinear integral 
\begin{equation} \label{eq.pair.CR.compl}
\langle u, h \rangle _{1}= \int_{\partial G} v (z)  u(z) dz 
\end{equation} 
for $ u \in {\mathcal O} (D)$ and $h \in {\mathcal O}(\hat {\mathbb C} \setminus D)$ where 
$G\Subset D$ and a piece-wise smooth curve $\partial G$ belongs to 
the intersection of the domains of $u$ and $h$, respectively (of course, the pairing 
does not depend on $G$ with the above prescribed properties).

To employ the machinery of the general theory of partial differential equations, 
denote by $S_{\overline \partial ^*} (U)$ 
the space of smooth solutions to the equation $\overline \partial ^* g  =0 $ on an open 
set $U \subset {\mathbb C}^n$; actually, these solutions are $n$-rows $g= (g_1, \dots, g_n)$ 
of functions $g_j \in C^\infty (U)$ satisfying 
$$
\sum_{j=1}^n \overline \partial_j^* g_j =0 \mbox{ in } U. 
$$
If $n=1$ then the space $S_{\overline \partial ^*}  (U)$ is just the space of 
anti-holomorphic functions and  the  space  $S_{\overline \partial ^*}  (\hat {\mathbb C} 
\setminus D)$ has similar properties as the space
${\mathcal O}(\hat {\mathbb C} \setminus D)$. Clearly,
the complex  conjugation induces a topological anti-linear isomorphism
$$
{\mathcal O} (\hat {\mathbb C} \setminus D) \ni h  \to \overline h =g\in 
S_{\overline \partial ^*}  (\hat {\mathbb C} \setminus D).
$$ 
In particular, in this case the pairing 
\begin{equation} \label{eq.pair.CR.adj.compl}
\langle u, g \rangle _{2}= \int_{\partial G} \overline{g} (z)  \, u(z) \, dz 
\end{equation}
defined for $ u \in {\mathcal O} (D)$ and $g \in S_{\overline \partial ^*}  
(\hat {\mathbb C} \setminus D)$ induces the topological (anti-linear) isomorphism, see 
\cite{Grot1}, 
\begin{equation} \label{eq.dual.adj.compl}
({\mathcal O} (D))^* \cong S_{\overline \partial ^*} (\hat {\mathbb C} \setminus D).
\end{equation}

 A. Grothendieck proved that 
for any elliptic operator $A$ on a smooth manifold $X$ admitting a regular bilateral 
fundamental solution a tolological isomorphism
\begin{equation} \label{eq.dual.A.compl.over.G.l=k}
(S_A(D))^* \cong S_{A^*} ( \hat X\setminus  D)
\end{equation}
holds true, where $\hat X$ indicates that one should consider solutions regular 
``at the infinity'' with respect to the chosen fundamental solution, 
see  \cite[Theorem 4]{Grot1} for details.

Unfortunately, for $n>1$ the Cauchy-Riemann  operator $\overline \partial $ does not admit 
 a bilateral fundamental solution because of the Hartogs theorem on the removability 
of compact singularities and so, \cite[Theorem 4]{Grot1} is not applicable in this case. 
Nevertheless, the operator $\overline \partial $ 
admits regular left fundamental solutions, for instance, one of them is presented 
by the Bochner-Martinell kernel ${\mathfrak U}_n (\zeta,z)$, $n>1$. Then 
\cite[Theorem 3]{Grot1} suggests us that the pairing 
\begin{equation} \label{eq.pair.CRn.compl}
\langle u, g \rangle _{3}= \int_{\partial G} 
\sum_{j=1}^n \overline g_j (z) \, u(z) \, (-1)^{j-1}\, d\overline z[j] \wedge dz 
\end{equation}  
defined for $ u \in {\mathcal O} (D)$ and $g \in S_{\overline \partial ^*} 
(\hat {\mathbb C}^n \setminus D)$, where 
$G\Subset D$ and a piece-wise smooth surface $\partial G$ belongs to 
the intersection of the domains of $u$ and $g$ respectively,
induces a continuous surjective mapping 
\begin{equation} \label{eq.dual.A.compl.over.G.l>k}
S_{\overline \partial ^*} 
(\hat {\mathbb C}^n \setminus D) \to ({\mathcal O} (D))^* 
\end{equation}
(again, the pairing does not depend on $G$ with the above prescribed properties).

But the operator $\overline \partial ^*$ is not elliptic for $n>1$ and therefore 
the space $S_{\overline \partial ^*} (\hat {\mathbb C}^n \setminus D) $ is 
too large for mapping \eqref{eq.dual.A.compl.over.G.l>k} to be bijective, see example below.

\begin{example}
Let $n\geq 1$ and let $D=B (0,1)$ be the unit ball centered at the origin. 
In order to show that mapping \eqref{eq.dual.A.compl.over.G.l>k} is not injective for $n>1$ 
we employ the harmonic homogeneous functions. 

Namely, 
 let $\{ h^{(j)}_{r} \}$, $r\geq 0$, be the orthonormal basis in the Lebesgue space 
$L^2 (\partial B (0,1))$ over the sphere $\partial B (0,1)$ in ${\mathbb R}^{2n}$, $n\geq 1$, 
consisting of spherical harmonics of degree $r$, see, for instance, \cite[Ch. XI]{So}, where 
$j$ is the number of the polynomial in the basis, $1\leq j\leq J(r,2n)$, 
$J_{0,2}=1$, $J_{r,2}=2$, $r \in \mathbb N$, and 
$J_{r,2n}=\frac{(2n+2r-2)\, (r+2n-3)!}{r! (2n-2)!}$, $n>1$. 
For each $r$, the harmonic continuation of $h_r$
to $D$ gives the harmonic homogeneous polynomial that we will still
 denote by $h_r$.  The harmonic continuation of $h_r$
to ${\mathbb R}^{2n} \setminus \overline D$ is
given by   
\begin{equation}\label{eq.harm.ext}
\frac{h_r (x)}{|x|^{2n+2r-2}}.
\end{equation}

If $n\geq 1$ then any vector function 
$ \overline \partial \Big(\frac{ h_r (x)}{|x|^{2n+2r-2}}\Big)$, $r \in {\mathbb Z}_+$,   
satisfies  
\begin{equation}  \label{eq.harmonic.hom}
\overline \partial  ^* \Big(
\overline \partial \Big(\frac{ h_r (x)}{|x|^{2n+2r-2}} \Big)\Big) 
=0 \mbox{ in } {\mathbb C}^{n}\setminus 
\{0\} ,
\end{equation}
and hence $\overline \partial \frac{ h_r (x)}{|x|^{2n+2r-2}}$ belongs 
to the space $S_{\overline \partial ^*} (\hat {\mathbb C}^{n} \setminus D)$ for all 
$r \in {\mathbb Z}_+$. 

Using the complex structure 
one may rewrite the harmonic homogeneous polynomials $h_r$ as 
$$
h_{p,q} (z,\overline z) = \sum_{|p+q|=r} a_{p,q} z^p \overline z^q
$$ 
where $p=(p_1, \dots p_n),  q=(p_1, \dots q_n) \in {\mathbb Z}^{n}_+$, 
$z^p = z_1^{p_1} \dots z_n^{p_n}$, $\overline z^p = 
\overline z_1^{q_1} \dots \overline z_n^{q_n}$, 
and $a_{p,q}$ are suitable complex coefficients, providing the harmonicity,  
see \cite[Ch. 1, \S 5]{Ky}. 

Now formula \eqref{eq.harmonic.hom} implies that the space 
$S_{\overline \partial^*} (\hat {\mathbb C}^{n} \setminus B(0,1))$  contains  the 
convergent in neighbourhoods of ${\mathbb C}^n \setminus B(0,1)$ series of the type   
\begin{equation}  \label{eq.harmonic.decomp}
\sum_{r=0}^\infty \sum_{|p+q|=r}  a^{(j)}_{p,q} \sum_{j=1}^{J(r)}
\overline \partial \Big(\frac{  z^p \overline z^q }{|z|^{2n+2r-2}} \Big)
\end{equation}  
with suitable complex coefficients $a^{(j)}_{p,q}$ and, moreover, it 
contains non-zero elements $g$
annihilating Grothendieck's pairing \eqref{eq.pair.CRn.compl} for all 
$u\in {\mathcal O}(B(0,1))$ if $n>1$. 

Indeed, take the vector 
$g^{(p,q)}= \overline \partial \Big( \frac{ 2\overline z^q z^p}{|z|^{2n+2|q|-2}} \Big) \in 
S_{\overline \partial^*} (\hat {\mathbb C}^{n} \setminus D)$. 

If $n=1$ then 
$$
\frac{ 2\overline z^q z^p}{|z|^{2n+2|q|-2}}= 
\frac{ 2\overline z^q z^p}{|z|^{2|q|}} = \frac{2 z^p }{z^q}
$$ 
and hence $g^{(p,q)}\equiv 0$ in ${\mathbb C} \setminus \{0\}$ in this case. 
Thus, we proceed with $n>1$. 

As it is known, for any smooth domain ${\mathcal D} \subset {\mathbb R}^n$ we have
$$
(-1)^{j-1}  dx[j] =\nu_j (x) ds (x) \mbox{ on } \partial {\mathcal D} ,
$$
where $\nu (x)= (\nu _1(x), \dots \nu _n(x))$ is the exterior unit normal vector to 
$\partial {\mathcal D}$ at the point 
$x$ and $ds $ is the volume form on $\partial D$. 
Clearly, for any  sphere $S_R$ centered at the origin with a radius $R$ 
we have $ \nu(x) = x/R$. Therefore 
$$
(-1)^{j-1} d\overline z[j] \wedge dz = 2^{n-1}\iota^n 
\frac{ z_j}{2R} \, ds (z,\overline z) 
$$
on $S$, see \cite[Lemma 3.5]{Ky}. 

Then, using Euler formula for positively 
homogeneous functions, we see that  
$$
\sum_{j=1}^n \overline g^{(p,q)}_j (z) \,  (-1)^{j-1}\, d\overline z[j] \wedge dz =
2^{n-1}\iota^n  \sum_{j=1}^n \frac{z_j}{R} 
 \overline{\overline \partial _j 
\Big( \frac{ \overline z^q z^p}{|z|^{2n+2|q|+2|p|-2}} \Big) }
\, ds   =
$$
\begin{equation} \label{eq.form}
\frac{2^{n-1}\iota^n}{R}  \overline{\sum_{j=1}^n 
 \overline z_j \overline \partial _j 
\Big( \frac{ \overline z^q z^p}{|z|^{2n+2|q|+2|p|-2}} \Big) }
\, ds = 2^{n-1}\iota^n  (1-|p|-n)  \frac{z^{q} \overline z^p ds }{R^{2n+2|q|+2|p|-1}}
\end{equation}
on the sphere  $S_R$.

Finally, as Gothendieck's pairing $\langle \cdot ,\cdot \rangle_{3} $ 
does not depend on the choice of the domain $G$ we may take 
the ball $G=B(0,R)$  of radius $R$ centered at the origin. Then 
$S_R \subset B(0,1)$ and formula \eqref{eq.form} yields for 
all $q\in {\mathbb Z}_+$ with $|q|\geq 1$:
$$
\langle z^s ,g^{(0,q)}\rangle_{3} = 2^{n-1}\iota^n 
(1-n) \int_{S_R} \frac{z^{q+s} ds }{R^{2n+2|q|-1}} =0 \mbox{ for all }
s \in {\mathbb Z}^n_+ 
$$
by the famous property of holomorphic functions. Therefore, 
$$
\langle u, g^{(0,q)} \rangle_{3} =0 \mbox{ for all } u\in {\mathcal O}(B(0,1)),
$$ 
for infinitely many elements $g^{(0,q)}$ with $|q|\geq 1$ that was to be proved.

At the end  we note that, according to formula
\eqref{eq.harm.ext} and Corollary \ref{c.Sigma},  the space 
$\Sigma (\hat {\mathbb C}^{n} \setminus B(0,1))$  coincides with set of all  the 
convergent in neighbourhoods of ${\mathbb C}^n \setminus B(0,1)$ series of the type   
\begin{equation*} 
 \sum_{|p|\geq 0}  \frac{a^{(j)}_{p}  z^p  }{|z|^{2n+2|p|-2}}
\end{equation*}  
with suitable complex coefficients $a^{(j)}_{p}$. Of course, the vector 
$$
g^{(p)}(z)=\overline \partial \Big( 
 \frac{  z^p  }{|z|^{2n+2|p|-2}}\Big)  
$$
belongs to the space $S_{\overline \partial^*} (\hat {\mathbb C}^{n} \setminus D)$. However, 
according to \eqref{eq.form},
$$
\langle z^p, g ^{(p)}\rangle_{3} =
2^{n-1}\iota^n 
(1-|p|-n) \int_{S_R} \frac{|z|^{2p_1} \dots |z|^{2p_n} ds }{R^{2n+2|p|-1}} \ne 0 
\mbox{ for all } 
p \in {\mathbb Z}^n_+ ,
$$
i.e. $g^{(p)}$ does not annihilate the space ${\mathcal O} (B(0,1))$.
\end{example}

\section{The proof of Theorem \ref{t.dual.compl.over.G}}
\label{s.proof}

Actually, the proof goes alongside with the classical scheme for the dualities  
\eqref{eq.dual.compl} and \eqref{eq.dual.A.compl.over.G.l=k}.

\subsection{The pairing and the mapping.} 
First, we recall that the elements of the space ${\mathcal H} (U)$
are actually infinitely differentiable for any open set $U \subset {\mathbb R}^n$ 
(they are even real analytic at each point $y \in U$). In particular, for a closed 
set $\sigma \subset {\mathbb C}^n$,  
any element of the space ${\mathcal H} (\sigma)$
is actually infinitely differentiable  on an open set $U \Supset \sigma$. 

Pick functions $u \in {\mathcal O} (D) $ and $v \in \Sigma ( \hat {\mathbb C}^n \setminus  
D)$. By the discussion above, there is a (unbounded) domain $U_v$ containing 
${\mathbb C}^n \setminus D$ and such that $v \in {\mathcal H} (U_v) \cap C^\infty (U_v) $. 
Hence $V_v=U_v \cap D$ is an open set in ${\mathbb C}^n$. Moreover, as $\partial D$ is 
connected, there is a smooth closed surface $\Gamma \subset V_v$ 
that is a boundary of a bounded domain $G\Subset D$. 

Then we may define the pairing $\langle \cdot, \cdot \rangle $ between the spaces 
${\mathcal O}(D)$ and $\Sigma (\hat {\mathbb C}^n \setminus D)$: 
\begin{equation} \label{eq.pair.A.compl.NShT}
\langle u, v \rangle = \int_{\Gamma}  \sum_{j=1}^n (-1)^{j-1}
\overline{(\overline \partial _j v_j) (z)} \,  u(z) \,  d\overline z[j] \wedge dz  \Big)  
\end{equation} 
for $ u \in {\mathcal O} (D)$ and $v \in \Sigma ( \hat {\mathbb C}^n \setminus  D)$. 
Note that the vector $w= \overline \partial  v$ belongs to $S_{
\overline \partial  ^*}( \hat {\mathbb C}^n \setminus  D)$ 
if $v \in \Sigma ( \hat {\mathbb C}^n \setminus  D)$ and hence 
pairing \eqref{eq.pair.A.compl.NShT} is still closely related to the Grothendieck pairing 
\eqref{eq.pair.CRn.compl}. 

Next, according to Stokes' formula, for any two surfaces 
$\Gamma_1$, $\Gamma_2$ 
with the declared above properties we have 
$$
\int_{\Gamma_1}  \sum_{j=1}^n (-1)^{j-1}
\overline{(\overline \partial _j v_j) (z)} \,  u(z) \,  d\overline z[j] \wedge dz  \Big) - 
\int_{\Gamma_2}  \sum_{j=1}^n (-1)^{j-1}
\overline{(\overline \partial _j v_j) (z)} \,  u(z) \,  d\overline z[j] \wedge dz  \Big) = 
$$
\begin{equation*} 
\int_{\Omega} \Big( 
\sum_{j=1}^n\overline{(\overline \partial_j v)} \, \overline \partial_j u 
 - \frac{1}{4} \Delta_{2n} u  \Big) d\overline z\wedge dz  =0
\end{equation*} 
for all $ u \in {\mathcal O}(D)$ and all $v \in\Sigma (\hat {\mathbb C} ^n\setminus D)$, 
where $\Omega$ is the open set bounded by the surfaces $\Gamma_1$, $\Gamma_2$. 
Thus, pairing \eqref{eq.pair.A.compl.NShT} does not depend on a particular choice of 
$\Gamma \subset V_v$.

Obviously, 
\begin{equation} \label{eq.pair.A.compl.est}
|\langle u, v \rangle| \leq  C_{\Gamma} \max_{z\in \Gamma} 
|\nabla v (z)|   \max_{x\in \Gamma} |u (z)| 
\end{equation}
with a constant $C_{\Gamma}$ independent on $ u \in {\mathcal O}(D)$ and all 
$v \in \Sigma (\hat {\mathbb C}^n \setminus D)$. Therefore, taking in account the topologies 
of the spaces under the consideration, pairing \eqref{eq.pair.A.compl.NShT}
induces a sesquilinear separately continuous mapping 
\begin{equation} \label{eq.pair.A.compl.map}
\langle \cdot, \cdot \rangle: {\mathcal O}(D) \times  
\Sigma (\hat {\mathbb C}^n \setminus D) \to 
\mathbb C
\end{equation}
In particular, for any fixed $v \in \Sigma ( \hat{\mathbb C}^n \setminus D)$ the functional 
\begin{equation} \label{eq.fv}
f_v (u) = \langle u, v \rangle, \,\, u  \in {\mathcal O}(D), 
\end{equation}
is bounded and linear, i.e. $f_v  \in ({\mathcal O}(D))^*$. 
Moreover, by \eqref{eq.pair.A.compl.est}, the mapping 
\begin{equation} \label{eq.mapping.compl}
\Sigma ( \hat {\mathbb C}^n \setminus  D) \ni v \to f_v \in ({\mathcal O}(D))^*
\end{equation}
is anti-linear and continuous. 

\subsection{The injectivity of the mapping.}  
Let us prove that  mapping \eqref{eq.mapping.compl} is injective. Indeed, let 
\begin{equation*}
\langle u, v \rangle = 0 \mbox{ for all } u  \in {\mathcal O}(D). 
\end{equation*}
According to Corollary \ref{c.Sigma}, 
there is a function $w \in H^1 (D) \cap {\mathcal O} (D)$ satisfying ${\rm t}^-(w)= v$ 
on $\partial D$.  In particular, 
\begin{equation*}
\langle w , v\rangle = 0 . 
\end{equation*}
On the other hand, as  $w \in H^1 (D) \cap  {\mathcal O} (D)$, 
then Stokes' formula  yields
\begin{equation} \label{eq.limit.1}
0=\langle w, v \rangle = \int_{\Gamma}  \sum_{j=1}^n (-1)^{j-1}
\overline{(\overline \partial _j v_j) (z)} \,  w(z) \,  d\overline z[j] \wedge dz  \Big) = 
\end{equation} 
\begin{equation*} 
\int_{\partial D} 
  \sum_{j=1}^n (-1)^{j-1}
\overline{(\overline \partial _j v_j) (z)} \,  v(z) \,  d\overline z[j] \wedge dz  \Big) =
\end{equation*}
\begin{equation*}
\lim_{R\to + \infty}\Big( 
\int_{B (0,R) \setminus \overline  D} 
|\overline \partial v (z)|^2 dx - \int_{|z|=R}\sum_{j=1}^n (-1)^{j-1}
\overline{(\overline \partial _j v_j) (z)} \,  v(z) \,  d\overline z[j] \wedge dz  \Big). 
\end{equation*}
Since $v $ is harmonic in ${\mathbb R}^{2n} \setminus D$ and vanishes at the infinity, 
we have  
$$
|\partial^\alpha v(z)| \leq c_1 |z|^{2-2n-|\alpha|}, \mbox{ if } n\geq 2, 
\alpha \in {\mathbb Z}^{2n}_{+}, 
$$ 
see, for instance, \cite[\S 24.10, formulae (33)-(35)]{Vla}. In particular, 
\begin{equation} \label{eq.limit.2}
\lim_{R\to + \infty}\Big( 
\int_{|z|=R}\sum_{j=1}^n (-1)^{j-1}
\overline{(\overline \partial _j v_j) (z)} \,  v(z) \,  d\overline z[j] \wedge dz  \Big)=0. 
\end{equation}

Therefore \eqref{eq.limit.1}, \eqref{eq.limit.2} imply
\begin{equation*}
0=\langle w, v \rangle  = \sum_{j=1}^n \|\overline 
\partial_j v\|^2_{L^2({\mathbb C}^n \setminus \overline  D)},
\end{equation*}
i.e. $v \in {\mathcal O} ( \hat{\mathbb C}^n \setminus \overline D) $. Moreover 
it belongs to $ {\mathcal O} ( \hat{\mathbb C}^n \setminus D) $ because it is 
harmonic in a neighbourhood of $\hat{\mathbb C}^n \setminus D$. 
As  $n>1$, Hartogs Theorem on the removable of compact singularities 
for holomorphic functions of several variables immediately yields
that $v$ extends as holomorphic functions over all ${\mathbb C}^n$. 

In particular, as $v \in {\mathcal O}({\mathbb C}^n)$ 
vanishes at the infinity, it is identically zero because 
of the Liouville Theorem, i.e. mapping \eqref{eq.mapping.compl} is anti-linear  continuous 
and injective.  

\subsection{The surjectivity of the mapping.}  
Let us prove that  \eqref{eq.mapping.compl} is surjective. 

Indeed, fix an element $f \in ({\mathcal O} (D))^*$. 
As ${\mathcal O} (D)$ is a closed subspace of $C(D)$, by the Khan-Banach 
Theorem there is a functional $F \in (C(D))^*$ coinciding 
with $f$ on $ {\mathcal O}(D)$. Then, by the classical Riesz duality for the space  
$C(D)$, there is a compactly supported in $D$ measure $\mu$,  such that 
\begin{equation} \label{eq.Riesz}
f(u) = \int_{K}  u (z) d\mu (z) \mbox{ for all } u \in {\mathcal O} (D),
\end{equation}
where the compact $K\Subset D$ contains the support ${\rm supp} (\mu)$  of $\mu$, 
see, for instance, \cite[\S 4.10]{Edw}.

Since $K\Subset D$  there is a domain $G$ with smooth boundary
 such that  $K\Subset G\Subset D$. In particular, by the Bochner-Martinelli formula 
\eqref{eq.Green.2} , we have 
\begin{equation} \label{eq.Green.2G}
(M_{\partial G}  u) (z) 
=  \left\{
\begin{array}{ll} u(z) & z \in G, \\
0, & z \not \in \overline G, \\
\end{array}
\right.
\end{equation} 
for any $u \in  {\mathcal O} (D)$. 

Now, formulae \eqref{eq.MB.*}, \eqref{eq.Riesz}, \eqref{eq.Green.2G} and Fubini theorem yield
\begin{equation} \label{eq.surj}
f(u) = \int_{K} 
\Big(\int_{\partial G} {\mathfrak U}_n (\zeta,z) u(\zeta)   \Big)  d\mu (z) =
\end{equation}
\begin{equation*}
\left\{ 
\begin{array}{lll}
\int_{\partial G} h (\zeta) \,  u(\zeta) \,  d\zeta , & n=1, 
\\[.2cm]
\int_{\partial G} \sum_{j=1}^n (-1)^{j-1}
\overline{(\overline \partial _j \hat v) (\zeta)} \,  u(\zeta) \,  d\overline \zeta[j] 
\wedge d\zeta  , & n>1, 
\\
\end{array}
\right.
\end{equation*}
for all  $u \in {\mathcal O} (D)$ where  
$$
h (\zeta) = \frac{1}{2\pi \iota } \int_{K} \frac{d \mu (z)}{\zeta-  z} , \, \, 
\hat v(\zeta)  =  \int_{K}  \Phi_{2n} (\zeta,z)  d\overline \mu (z).
$$

If $n=1$ then the function $h$ is holomorphic 
in $ {\mathbb C} \setminus K$ and vanishes at the infinity because of the  
behaviour of the Cauchy kernel, i.e. $h \in {\mathcal O}  
(\hat {\mathbb C} \setminus D)$ because $K\Subset D$. This gives us  classical duality 
\eqref{eq.dual.compl} by A.Grothendieck \cite{Grot2}, G. K{\"o}the   \cite{Koth2},  and  J. 
Sebasti\~{a}o e Silva \cite{Silva} related to pairing \eqref{eq.pair.CR.compl}. Then the 
replacement $g=\overline h$ induces classical duality \eqref{eq.pair.CRn.compl} 
by  A.Grothendieck  \cite{Grot1} related to pairing \eqref{eq.pair.CR.adj.compl}. 

For $n>1$, as the kernel $\Phi_{2n} (\zeta,z)$ represents the right fundamental solution to 
the Laplace operator
with respect to the variable $\zeta$, we see that $\hat v $ is harmonic 
in $ {\mathbb C}^n \setminus K$ and vanishes at the infinity. 
In particular, $\hat v \in {\mathcal H}  (\hat {\mathbb C}^n \setminus D)$  
because $K\Subset D$.  

However we can not grant easily that $\hat v \in \Sigma (\hat {\mathbb C}^n 
\setminus D)$.   

To resolve this difficulty, let us fix $u\in {\mathcal O}(D)$ and a sequence 
$\{ u_\nu \} \subset {\mathcal O} (D) \cap H^1 (D)$ approximating $u$ in  
$C (D)$; it exists by the hypothesis of the theorem. Then, by  \eqref{eq.surj} 
and Stokes' formula, 
\begin{equation} \label{eq.limit} 
f(u)  = \lim_{\nu  \to \infty} f(u_\nu) = 
\end{equation} 
\begin{equation*}
\lim_{\nu  \to \infty}
\int_{\partial G} 
\sum_{j=1}^n (-1)^{j-1}
\overline{(\overline \partial _j \hat v) (\zeta)} \,  u_\nu(\zeta) \,  d\overline \zeta[j] 
\wedge d\zeta = 
\end{equation*}
\begin{equation*}
\lim_{\nu  \to \infty}
\int_{\partial D} \sum_{j=1}^n (-1)^{j-1}
\overline{(\overline \partial _j \hat v) (\zeta)} \,  u_\nu(\zeta) \,  d\overline \zeta[j] 
\wedge d\zeta. 
\end{equation*} 
On the other hand, we recall the following 
result of \cite{Rom}. For a function $w_0\in H^{1/2} (\partial D)$ 
 denote by ${\mathcal P}_{ D} (w_0)$ 
the unique solution $w \in H^1 (D)$ to the interior 
Dirichlet problem for the Laplace equation:
 \begin{equation} \label{eq.Dir.int}
\left\{ 
\begin{array}{lll}
\Delta_{2n} {\mathcal P}_{D} (w_0) =0 & \mbox{ in } &  D, \\
{\mathcal P}_{ D} (w_0) = w_0 & \mbox{ on }  & \partial D. \\ 
\end{array}
\right.
\end{equation}
Similarly,  denote by $\tilde  {\mathcal P}_{ D} (w_0) $ 
the unique solution to the exterior Dirichlet problem for the Laplace equation:
 \begin{equation} \label{eq.Dir.ext}
\left\{ 
\begin{array}{lll}
\Delta_{2n} \tilde  {\mathcal P}_{ D} (w_0) =0 & \mbox{ in } & {\mathbb C}^n \setminus 
\overline D, \\
\tilde {\mathcal P}_{D} (w_0) = w_0 & \mbox{ on }  & \partial D, \\ 
\end{array}
\right.
\end{equation}
satisfying \eqref{eq.zero.infty} and such that 
$\overline \partial 
\tilde{\mathcal P}_{ D} (w_0) \in L^2 ({\mathbb C}^n \setminus \overline D)$. 

\begin{lemma} \label{l.NaSh}
The Hermitian form
$$
h_{D} (w, \tilde w) = 
 \sum_{j=1}^n \int_{D}
\overline{(\overline \partial_j w)} \, \overline \partial_j \tilde w  \, dx 
+ \sum_{j=1}^n \int_{{\mathbb C}^n \setminus \overline D} 
 \overline{( \overline \partial_j \tilde  {\mathcal P}_{ D} (w))} \, \overline \partial_j 
\tilde {\mathcal P}_{ D} (\tilde w) \, dx
$$
defines an inner product on $H^1 (D)$ and the topologies induced in $H^1 (D)$ by
$h_{D} (w, \tilde w)$ and by the standard inner product are equivalent.
\end{lemma}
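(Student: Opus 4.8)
The plan is to prove Lemma~\ref{l.NaSh} by showing that the Hermitian form $h_D(w,\tilde w)$ is (i) positive definite on $H^1(D)$, (ii) bounded above by a constant times $\|w\|_{H^1(D)}\|\tilde w\|_{H^1(D)}$, and (iii) bounded below by a constant times $\|w\|_{H^1(D)}^2$ on the diagonal; by polarization and the open mapping theorem this yields the equivalence of topologies. The key new ingredient is the exterior harmonic extension operator $\tilde{\mathcal P}_D$, whose mapping properties are recalled from \cite{Rom}: since $\tilde{\mathcal P}_D(w_0)$ solves the exterior Dirichlet problem, satisfies the decay condition \eqref{eq.zero.infty} and has $\overline\partial\tilde{\mathcal P}_D(w_0)\in L^2({\mathbb C}^n\setminus\overline D)$, one has a continuous linear operator ${\rm t}^-(H^1(D))=H^{1/2}(\partial D)\to \dot H^1({\mathbb C}^n\setminus\overline D)$, and in particular the second sum in $h_D$ is finite and controlled by $\|{\rm t}^-w\|_{H^{1/2}(\partial D)}^2\le c\,\|w\|_{H^1(D)}^2$ via the trace theorem. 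Combined with the trivial bound $\sum_j\|\overline\partial_j w\|_{L^2(D)}^2\le\|w\|_{H^1(D)}^2$ for the first sum, this gives the upper estimate (ii), so $h_D$ is a bounded Hermitian form; sesquilinearity and the Hermitian symmetry are immediate from the definition.

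For positive definiteness and the lower bound, suppose $h_D(w,w)=0$. Then $\overline\partial_j w=0$ in $D$ for all $j$, so $w\in{\mathcal O}(D)\cap H^1(D)$, and simultaneously $\overline\partial\tilde{\mathcal P}_D({\rm t}^-w)=0$ in ${\mathbb C}^n\setminus\overline D$, so $W:=\tilde{\mathcal P}_D({\rm t}^-w)$ is holomorphic on the exterior domain, vanishes at infinity, and is harmonic there. By Hartogs' theorem on removable compact singularities ($n>1$) $W$ extends holomorphically to all of ${\mathbb C}^n$, and being holomorphic, bounded and vanishing at infinity, $W\equiv0$ by Liouville; hence ${\rm t}^-w={\rm t}^+W={\rm t}^+ W|_{\partial D}=0$. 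A holomorphic $H^1(D)$ function with zero trace is identically zero (e.g. its harmonic real and imaginary parts solve a homogeneous Dirichlet problem), so $w=0$, proving that $h_D$ is an inner product. To upgrade this to the quantitative lower bound $h_D(w,w)\ge c\,\|w\|_{H^1(D)}^2$ I would argue by contradiction and compactness: if no such $c$ existed, pick $w_k$ with $\|w_k\|_{H^1(D)}=1$ and $h_D(w_k,w_k)\to0$; by Rellich a subsequence converges in $L^2(D)$ to some $w$, while $h_D(w_k,w_k)\to0$ forces $\overline\partial_j w_k\to0$ in $L^2(D)$, so $\{w_k\}$ is Cauchy in $H^1(D)$ and converges there to $w$ with $\|w\|_{H^1(D)}=1$ and $h_D(w,w)=0$, contradicting positive definiteness.

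The main obstacle I anticipate is the passage in the limiting step that requires controlling the \emph{boundary} contribution, i.e. ensuring that $\overline\partial_j w_k\to0$ in $L^2(D)$ together with the $L^2(D)$-convergence of $w_k$ really does yield $H^1(D)$-Cauchyness; this uses a Poincar\'e-type inequality of the form $\|w\|_{H^1(D)}^2\le C\big(\|w\|_{L^2(D)}^2+\sum_j\|\overline\partial_j w\|_{L^2(D)}^2\big)$, which for complex-valued functions on a Lipschitz domain is not completely trivial because $\overline\partial$ is not elliptic --- however it follows from the standard inequality for the full real gradient once one notes that $|\nabla w|^2 = 2\sum_j(|\overline\partial_j w|^2+|\partial_j w|^2)$ and that, for the \emph{anti}-holomorphic part, an analogous $\partial$-control is available, or more directly from a G\r{a}rding-type estimate for the (degenerate) form $\sum_j\|\overline\partial_j\cdot\|^2$ augmented by the exterior term, which already appears implicitly in the integration-by-parts argument in the proof of Lemma~\ref{l.Sigma}. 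Alternatively, one can bypass this by invoking the result of \cite{Rom} directly, where the equivalence of $h_D$ with the standard inner product is essentially established; in that case the proof reduces to quoting \cite{Rom} and checking that the hypotheses (Lipschitz domain, $n>1$) are met, with the Hartogs--Liouville argument above supplying the strict positivity that makes $h_D$ a genuine inner product rather than merely a semi-norm.
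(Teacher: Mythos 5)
The paper gives no argument here at all: its ``proof'' of Lemma~\ref{l.NaSh} is a citation to \cite{Rom} (and \cite{NaSh}). Your fallback option --- quoting \cite{Rom} and supplying the strict positivity separately --- is therefore exactly what the authors do, and your positive-definiteness argument (Hartogs' removability plus Liouville to kill the exterior extension, then uniqueness of the Dirichlet problem to kill $w$) is correct and mirrors the injectivity step in \S\ref{s.proof}. The upper bound via continuity of $\tilde{\mathcal P}_D:H^{1/2}(\partial D)\to$ the finite-Dirichlet-energy exterior space, combined with the trace theorem, is also fine.

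The self-contained route you sketch for the lower bound, however, has a genuine gap at its only nontrivial point. The Poincar\'e-type inequality you invoke,
$\|w\|_{H^1(D)}^2\le C\big(\|w\|_{L^2(D)}^2+\sum_j\|\overline\partial_j w\|_{L^2(D)}^2\big)$,
is \emph{false}: take $w_k=z_1^k/\|z_1^k\|_{L^2(D)}$ on a ball; then $\overline\partial w_k\equiv 0$ while $\|\nabla w_k\|_{L^2(D)}\to\infty$, so no constant $C$ can work. (The identity $\sum_j\|\overline\partial_j u\|_{L^2}^2=\tfrac14\|\nabla u\|_{L^2}^2$ holds for $u\in H^1_0(D)$ by integration by parts, but not for general $u\in H^1(D)$, and there is no ``$\partial$-control for the anti-holomorphic part'' that rescues it --- the whole point of the lemma is that the holomorphic directions inside $D$ are recovered only through the \emph{exterior} term.) Consequently your compactness argument does not close: from $w_k\to w$ in $L^2(D)$ and $\overline\partial_j w_k\to 0$ in $L^2(D)$ you cannot conclude that $\{w_k\}$ is Cauchy in $H^1(D)$. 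What the Peetre-type contradiction argument actually requires is the G\aa rding inequality
$\|w\|_{H^1(D)}^2\le C\big(h_D(w,w)+\|w\|_{L^2(D)}^2\big)$,
with the exterior Dirichlet integral of $\tilde{\mathcal P}_D({\rm t}^-w)$ genuinely present on the right; this estimate is precisely the substance of Romanov's theorem and is nowhere established in your proposal. Until that inequality is proved (or imported from \cite{Rom}, at which point the compactness argument becomes redundant), the claimed equivalence of topologies is not justified.
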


\begin{proof} See \cite{Rom} (or \cite{NaSh} for more advanced properties).
\end{proof}

Then Stokes' formula implies that 
$$
- \int_{\partial D} \sum_{j=1}^n (-1)^{j-1}
\overline{(\overline \partial _j \hat v) (\zeta)} \,  h(\zeta) \,  d\overline \zeta[j] 
\wedge d\zeta = h_D (h,{\mathcal P}_D \hat v)  
$$
for each $h \in H^1 (D) \cap {\mathcal O} (D)$. In particular, \eqref{eq.limit}  yields
\begin{equation} \label{eq.f.limit}
f(u)  = -  \lim_{\nu  \to \infty}  h_{D} (u_\nu,{\mathcal P}_D \hat v) = -
\lim_{\nu  \to \infty}  h_{D} (u_\nu, \Pi_D {\mathcal P}_D \hat v)   
\end{equation}
where $\Pi_D: H^1 (D) \to H^1 (D) \cap {\mathcal O}  (D)$ is the orthogonal projection 
related to the Hermitian form $ h_{D} (\cdot , \cdot)$. 

We are going to show that the function $\tilde{\mathcal P}_{ D}( \Pi_D {\mathcal P}_D w)$ 
belongs to the space ${\mathcal H} (\hat {\mathbb C}^n \setminus  D)$ if 
$w \in {\mathcal H} (\hat {\mathbb C}^n \setminus  D)$. Indeed, fix $w \in {\mathcal H} (\hat 
{\mathbb C}^n \setminus  D)$. By the definition there is a domain $G\Subset D$ with 
Lipschitz  $w \in {\mathcal H}(\hat {\mathbb C}^n \setminus  G)$. 
As the pairing does not depend on $G$, we choose it in such a way that 
the set $D\setminus G$ has no compact components in $D$.  
We endow the spaces $H^1 (D) $, $H^1 (G) $ 
with the inner products $h_{D} (\cdot , \cdot)$, $h_{G} (\cdot , \cdot)$, 
respectively.

Next we note that the exterior Dirichlet problem is well-posed and then the Hermitian form 
$$
\tilde h_{D} (W, \tilde W)  = \sum_{j=1}^n
\int_{D} \overline{(\overline \partial_j {\mathcal P}_D W)}  
\overline \partial_j  {\mathcal P}_D (\tilde W)  dx + 
\sum_{j=1}^n \int_{{\mathbb C}^n \setminus 
\overline D} \overline{( \overline \partial_j W)} \overline \partial_j  \tilde W  dx
 $$
is an inner product on the space $\tilde H^1 ({\mathbb C}^n \setminus 
\overline D) \cap  {\mathcal H}(\hat {\mathbb C}^n \setminus 
\overline D)$ consisting of such elements from ${\mathcal H} (\hat {\mathbb C}^n \setminus 
\overline D) \cap H^1_{\rm loc } ({\mathbb C}^n \setminus  D)$ that $\overline \partial_j W 
\in L^2 ({\mathbb C}^n \setminus \overline D)$ for all $1\leq j \leq n$. Moreover, by the 
construction, \begin{equation} \label{eq.inner.rel.D}
h_{D} ({\mathcal P}_D W, h) = 
\tilde h_{{\mathbb C}^n \setminus 
\overline D} (W, \tilde {\mathcal P}_{ D} h ) 
\end{equation}
for all $h \in H^1 (D) \cap {\mathcal H} (D) $ and $W\in \tilde H^1 ({\mathbb C}^n \setminus 
\overline D) \cap  {\mathcal H} (\hat {\mathbb C}^n \setminus 
\overline D)$. In other words, the mapping $\tilde {\mathcal P}_{ D}$ defines 
an isomorphism between the Banach spaces $H^1 (D) \cap {\mathcal H} (D)$
and $\tilde H^1 ({\mathbb C}^n \setminus \overline D) \cap {\mathcal H} (
\hat {\mathbb C}^n \setminus \overline D )$, and, similarly  
\begin{equation} \label{eq.inner.rel.G}
h_{G} ({\mathcal P}_G W, h) = 
\tilde h_{ G} (W, \tilde {\mathcal P}_{G} h ) 
\end{equation}
for all $h \in H^1 (G) \cap {\mathcal H} (G)$ and $W\in \tilde H^1 ({\mathbb C}^n \setminus 
\overline G) \cap {\mathcal H} (\hat {\mathbb C}^n \setminus 
\overline G)$.

We also denote by $Y^1(G)$ the closure of 
$H^1 (D) \cap {\mathcal O}(D)$ in $H^1 (G) \cap {\mathcal O} (G)$.  
By the Stieljes-Vitali theorem, the embedding $R:  H^1 (D) \cap {\mathcal H} (D) \to 
 H^1 (G) \cap {\mathcal H} (G)$ is compact. Then Hilbert theorem 
on the spectrum of compact self-adjoint operator implies that  
there is an orthonormal (with respect to 
$h_{D} (\cdot , \cdot)$) basis $\{ b_m\}_{m \in \mathbb N}$ 
in the space $H^1 (D) \cap {\mathcal H} (D)$ that form an orthogonal system 
(with respect to $h_{G} (\cdot , \cdot)$) in $H^1 (G) \cap {\mathcal H} (G)$, satisfying 
$$
\Pi_DR^*R\Pi_D b_m = \lambda_m b_m.
$$
Moreover, as the set $D\setminus G$ has no compact components in $D$, Mergelyan Theorem 
implies that the space $H^1 (D) \cap{\mathcal H}(D) $ is everywhere dense in $H^1 (G) \cap 
{\mathcal H}(G) $, i.e. the system $\{R b_m \}_{m \in \mathbb N}$ is an 
orthogonal basis  in the space  $H^1 (G) \cap {\mathcal H}(G) $, see \cite{ShTaSMJ}. 
In particular, the eigenvalues $\lambda_m \ne 0$ correspond to the subspace 
$H^1 (D) \cap {\mathcal O}(D)$ and the system $\{b_m \}_{\lambda_m\ne 0}$ is an orthonormal 
basis in this space and the system $\{R b_m \}_{\lambda_m\ne 0}$ is an orthogonal 
basis  in the space $Y^1 (G)$. Then the projection $\Pi_D$ is 
given by 
$$
\Pi_D  h = \sum_{\lambda_m\ne 0} h_{D} (h, b_m) b_m \mbox{ for all } h \in H^1 (D) \cap 
{\mathcal H}(D).
$$
However, by \eqref{eq.inner.rel.D}, we may actually consider the operator $\Pi_D {\mathcal P}_D$ as the orthogonal 
projection from  $\tilde H^1 ({\mathbb R}^n \setminus 
\overline D) \cap {\mathcal H} (\hat {\mathbb C}^n \setminus 
\overline D)$ to the closed subspace of functions $W$ with $
\overline \partial P_D W=0$ in $D$, i.e.
$$
\tilde{\mathcal P}_{D}\Pi_D  h = \sum_{\lambda_m\ne 0} h_{D} (h, b_m) \tilde{\mathcal P}_{D} b
_m \mbox{ for all }  h \in H^1 (D) \cap 
{\mathcal H}(D).
$$
If we denote by $\tilde R$ the continuous embedding operator 
$\tilde R: {\mathcal H} (\hat {\mathbb C}^n \setminus  \overline G) \cap \tilde H^1  
 ({\mathbb C}^n \setminus  \overline G) \to  {\mathcal H} (\hat {\mathbb C}^n \setminus  \overline D) \cap \tilde H^1   ({\mathbb C}^n \setminus  \overline D)$ then 
for any $w \in {\mathcal H} (\hat {\mathbb C}^n \setminus  D)$ we have 
$$
 w= \sum_{m=1}^\infty h_{G} ({\mathcal P}_{G} w , R b_m)  \frac{\tilde{\mathcal P}_{G} Rb_m}
{h_G (Rb_m, Rb_m)}, \, 
\tilde R w= \sum_{m=1}^\infty h_{D} ({\mathcal P}_{D} w, b_m) \tilde{\mathcal P}_{D} b_m,  
$$
\begin{equation} \label{eq.Pi.1}
\tilde{\mathcal P}_{D} \Pi_D {\mathcal P}_D\tilde R w= \sum_{\lambda_m\ne 0} 
h_{D} ({\mathcal P}_{D} \tilde R w, b_m) \tilde{\mathcal P}_{D} b_m.  
\end{equation}
Of course, by the Uniqueness Theorem for harmonic functions, $w$ is the unique 
harmonic extension of $Rw$ from ${\mathbb C}^n \setminus  D$ to ${\mathbb C}^n \setminus  G$. 
Besides, if $\lambda_m \ne 0$ then 
$$
h_G (Rb_m, Rb_m) = h_D (\Pi_DR^*R\Pi_D b_m, b_m) = \lambda_m
$$
because $h_D (b_m, b_m) =1$ and then the series 
\begin{equation} \label{eq.Pi.2}
\sum_{\lambda_m\ne 0} h_{G} ({\mathcal P}_{G} w , R b_m)  \frac{\tilde{\mathcal P}_{G} Rb_m}
{h_G (Rb_m, Rb_m)} = \sum_{\lambda_m\ne 0} h_{G} ({\mathcal P}_{G} w , R b_m)  \frac{\tilde{\mathcal P}_{G} Rb_m}
{\lambda_m}  
\end{equation}
converges in  the space ${\mathcal H} (\hat {\mathbb C}^n \setminus  \overline G) \cap \tilde H^1  ({\mathbb C}^n \setminus  \overline G)$.

On the other hand, if $w \in {\mathcal H} (\hat {\mathbb C}^n \setminus  D)$, 
then, by Stokes' formula, 
\begin{equation} \label{eq.Pi}
h_{D} ( {\mathcal P}_D \tilde R w, \Pi_D h) 
= -\int_{\partial D}
\sum_{j=1}^n (-1)^{j-1}
\overline{(\overline \partial _j w) (\zeta)} \,  \Pi_D h(\zeta) \,  d\overline \zeta[j] 
\wedge d\zeta = 
\end{equation}
\begin{equation*} 
-\int_{\partial G}
\sum_{j=1}^n (-1)^{j-1}
\overline{(\overline \partial _j w) (\zeta)} \, \Pi_D h(\zeta) \,  d\overline \zeta[j] 
\wedge d\zeta = 
 h_{G} ({\mathcal P}_G w, R \Pi_D h)  . 
\end{equation*}
for all $h \in H^1 (D) \cap {\mathcal H} (D)$. 
In particular, 
\begin{equation} \label{eq.Pi.3} 
\tilde{\mathcal P}_{D} \Pi_D {\mathcal P}_D\tilde R w= \sum_{\lambda_m\ne 0} 
 h_{G} ({\mathcal P}_G w, R b_m) \tilde{\mathcal P}_{D} b_m.  
\end{equation}

Moreover, since ${\mathcal P}_G \tilde {\mathcal P}_G    =I$, 
formula \eqref{eq.Pi} implies 
$$
h_{D} ( {\mathcal P}_D \tilde R \tilde {\mathcal P}_G Rb_m, \Pi_D h) = 
 h_{G} (Rb_m, R R\Pi_D h) = \lambda_m 
h_{D} (b_m, \Pi_D h) , 
$$
for all $h \in H^1 (D) \cap {\mathcal H} (D)$ if $\lambda_m \ne 0$. Thus, 
as  $H^1 (D) \cap {\mathcal O}(D)$ 
is densely embedded to $Y^1 (G)$ and  the mapping $\tilde {\mathcal P}_{ G}$ defines 
an isomorphism between the Banach spaces $H^1 (G) \cap {\mathcal H} (G)$
and $\tilde H^1 ({\mathbb C}^n \setminus \overline G) \cap {\mathcal H} (
\hat {\mathbb C}^n \setminus \overline G )$, formulas \eqref{eq.Pi.1}, \eqref{eq.Pi.2}, 
\eqref{eq.Pi.3} yield 
$$
\tilde{\mathcal P}_{D} \Pi_D {\mathcal P}_D\tilde R w = 
\sum_{\lambda_m\ne 0} h_{G} ({\mathcal P}_{G} w , R b_m)  \frac{\tilde{\mathcal P}_{G} Rb_m}
{h_G (Rb_m, Rb_m)}.
$$
In particular, this means that $\tilde {\mathcal P}_{ D} 
\Pi_D {\mathcal P}_D w \in {\mathcal H} (\hat {\mathbb C}^n \setminus D)$ and  
$$
\overline \partial {\mathcal P}_D \tilde {\mathcal P}_{D} 
\Pi_D {\mathcal P}_D w =0 \mbox{ in } D \mbox{
for all } w\in {\mathcal H} (\hat {\mathbb C}^n \setminus D).
$$

Therefore, $v=\tilde {\mathcal P}_{ D} 
\Pi_D {\mathcal P}_D \hat v $ belongs to $ \Sigma (\hat {\mathbb C}^n \setminus D)$ 
and, according to \eqref{eq.limit}, \eqref{eq.f.limit}, 
\begin{equation*} 
f(u)  = -\lim_{\nu  \to \infty}  h_{D} (u_\nu, v)   =
\end{equation*}
\begin{equation*}
\int_{\partial G} \sum_{j=1}^n  (-1)^{j-1}
\overline{(\overline \partial_j v (\zeta))} \, u(\zeta) \,     d\overline \zeta 
 [j]\wedge d\zeta    = \langle u, v \rangle,
\end{equation*}
i.e. mapping \eqref{eq.mapping.compl} is surjective. 

Finally, the Closed Graph Theorem for the $DF$-spaces, see \cite[Ch. 6]{Edw} or 
\cite[Corollary A.6.4]{Mori}, implies that the inverse mapping for \eqref{eq.mapping.compl} 
is continuous, too, i.e. mapping \eqref{eq.mapping.compl} is a topological (anti-linear) 
isomorphism. 

The proof of Theorem \ref{t.dual.compl.over.G} is complete.

\section{Miscellaneous } 
\label{s.misc}

As we have noted in the Introduction, for a bounded domain $D \Subset {\mathbb C}^n$ with 
real analytic boundary, such that the space ${\mathcal O}(\overline D)$  is dense  in 
${\mathcal O}(D)$, Theorem \ref{t.dual.compl.over.G} can be extracted from the results of 
\cite{NaciShlaTark1}, where the duality \eqref{eq.dual.self} was proved for a very special 
pairing related to \eqref{eq.pair.A.compl.NShT} (cf. also \cite{AizeGind1}, \cite{AizeMit}, 
\cite{Stou1}, and  \cite{ShTaDBRK} for \eqref{eq.dual.self} corresponding to some other types 
of pairings in the bounded domain with real analytic boundary). The essential difference between the assumptions on $D$ in this particular case appears because the proof in \cite{NaciShlaTark1} uses the Grothendieck duality 
$$
({\mathcal H} (D ))^* \cong {\mathcal H} (\hat {\mathbb C}^{n} \setminus  D )  
$$
following from \cite[Theorem 4]{Grot1} for any bounded domain and  the continuous mapping 
$$
\tilde {\mathcal P}_{D} : 
{\mathcal O} (\overline D)  \to {\mathcal H} (\hat {\mathbb C}^{n} \setminus  D ) 
$$
that is granted by \cite{MorrNire1} for domains with real analytic boundaries (of course, 
this can not be true for all bounded domains even with $C^\infty$-smooth boundaries).

It follows from \cite[Theorem 8.1]{NaciShlaTark1} that pairing defined in 
\cite{NaciShlaTark1} induces a topological isomorphism \eqref{eq.dual.self} if and only if 
he space ${\mathcal O} (\overline D)$ is dense in ${\mathcal O}(D)$. The last observation 
suggests us to look for a similar statement related to mapping  \eqref{eq.mapping.compl}. 

\begin{corollary} \label{c.dual.compl.over.G}
Let $D\subset {\mathbb C}^n$, $n>1$, be a bounded domain with a connected Lispschitz boundary. Then
pairing \eqref{eq.pair.A.compl.NShT} induces a topological isomorphism 
\eqref{eq.dual.A.compl.over.G} if and only if the space $H^1 (D) \cap {\mathcal O} (D)$ 
is dense in ${\mathcal O} (D)$. 
\end{corollary}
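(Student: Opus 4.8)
The plan is to prove the corollary by establishing the two implications separately. The ``if'' direction is nothing new: it is precisely the content of Theorem \ref{t.dual.compl.over.G}, whose proof in \S\ref{s.proof} exhibits pairing \eqref{eq.pair.A.compl.NShT} as the sesquilinear form realizing the topological isomorphism \eqref{eq.dual.A.compl.over.G}. So the real work lies in the ``only if'' direction: assuming that \eqref{eq.pair.A.compl.NShT} induces a topological isomorphism \eqref{eq.dual.A.compl.over.G}, deduce that $H^1(D)\cap{\mathcal O}(D)$ is dense in ${\mathcal O}(D)$.

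\emph{Strategy for the necessity.} I would argue by contraposition: suppose $H^1(D)\cap{\mathcal O}(D)$ is \emph{not} dense in ${\mathcal O}(D)$, and produce an element $v\in\Sigma(\hat{\mathbb C}^n\setminus D)$, $v\ne 0$, with $f_v\equiv 0$, thereby breaking injectivity of \eqref{eq.mapping.compl} and hence the isomorphism. Let $Z$ be the closure of $H^1(D)\cap{\mathcal O}(D)$ in ${\mathcal O}(D)$; by assumption $Z\ne{\mathcal O}(D)$. By Hahn--Banach (applied in the Fr\'echet space ${\mathcal O}(D)$, then extended to $C(D)$ and represented by a compactly supported measure $\mu$ on some $K\Subset D$ exactly as in \eqref{eq.Riesz}), there is a nonzero functional $f\in({\mathcal O}(D))^*$ vanishing on $Z$, i.e.
\begin{equation*}
\int_K u(z)\,d\mu(z)=0\quad\text{for all }u\in H^1(D)\cap{\mathcal O}(D),\qquad \int_K u_0(z)\,d\mu(z)\ne 0\text{ for some }u_0\in{\mathcal O}(D).
\end{equation*}
Running this $f$ through the construction in \S\ref{s.proof} produces $\hat v(\zeta)=\int_K\Phi_{2n}(\zeta,z)\,d\overline\mu(z)\in{\mathcal H}(\hat{\mathbb C}^n\setminus D)$ and then $v=\tilde{\mathcal P}_D\Pi_D{\mathcal P}_D\hat v\in\Sigma(\hat{\mathbb C}^n\setminus D)$ with $f_v=\langle\cdot,v\rangle=f$ on ${\mathcal O}(D)$. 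Since $f\ne 0$ we get $v\ne 0$, yet the \emph{same} $v$, read off from a functional that annihilates all of $H^1(D)\cap{\mathcal O}(D)$, would have to coincide under the purported isomorphism with the functional $f$; the contradiction I actually want is cleaner if instead I observe: the construction shows every $f\in({\mathcal O}(D))^*$ of the form \eqref{eq.Riesz} equals $f_v$ for some $v\in\Sigma(\hat{\mathbb C}^n\setminus D)$, so surjectivity of \eqref{eq.mapping.compl} is automatic; the density hypothesis enters \emph{only} to force $f(u)=-\lim h_D(u_\nu,v)$ and hence the compatibility $f(u)=\langle u,v\rangle$ for \emph{all} $u$, not just those in $H^1(D)\cap{\mathcal O}(D)$. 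If density fails, there exist $f$ and $v$ with $f_v=f$ on $H^1(D)\cap{\mathcal O}(D)$ but $f_v\ne f$ on ${\mathcal O}(D)$, which contradicts $f_v$ being well-defined on all of ${\mathcal O}(D)$ by \eqref{eq.pair.A.compl.map}.

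\emph{A sharper route.} The tidiest argument is to show directly that the range of \eqref{eq.mapping.compl}, i.e. the set of functionals $f_v$ with $v\in\Sigma(\hat{\mathbb C}^n\setminus D)$, always annihilates no more than the closure of $H^1(D)\cap{\mathcal O}(D)$, and in fact each $f_v$ vanishes on $H^1(D)\cap{\mathcal O}(D)$ forces $v=0$ (this is exactly the injectivity argument of \S\ref{s.proof}, which used Stokes, the decay estimates \eqref{eq.limit.2}, Hartogs and Liouville and did \emph{not} use the density hypothesis). Hence \eqref{eq.mapping.compl} is always injective; therefore it is a topological isomorphism if and only if it is surjective and open. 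Now couple this with the observation that the image of \eqref{eq.mapping.compl} separates points of ${\mathcal O}(D)$ \emph{precisely} when every $u\in{\mathcal O}(D)$ with $\langle u,v\rangle=0$ for all $v\in\Sigma$ is zero. Using the description of $\Sigma$ in Corollary \ref{c.Sigma} together with the Bochner--Martinelli representation \eqref{eq.Green.2G}, one checks that $\langle u,v\rangle=0$ for all admissible $v$ is equivalent to $u$ lying in the closure of $H^1(D)\cap{\mathcal O}(D)$ in ${\mathcal O}(D)$ (one direction is Stokes as in \eqref{eq.limit.1}; the other uses that the functionals $f_{\hat v}$ built from measures as in \eqref{eq.surj} already exhaust $({\mathcal O}(D))^*$, so a $u$ killed by all of them is killed by $({\mathcal O}(D))^*$ and hence is $0$, while restricting these functionals to $H^1(D)\cap{\mathcal O}(D)$ and invoking $f(u)=-\lim h_D(u_\nu,v)$ identifies the annihilator with $\overline{H^1(D)\cap{\mathcal O}(D)}$). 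Consequently \eqref{eq.mapping.compl} is surjective onto $({\mathcal O}(D))^*$ if and only if $\overline{H^1(D)\cap{\mathcal O}(D)}={\mathcal O}(D)$, which is the density statement; and when it is surjective, the Closed Graph Theorem for $DF$-spaces (as cited in \S\ref{s.proof}) upgrades it to a topological isomorphism, while if it is not surjective it cannot be an isomorphism at all. This gives both directions.

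\emph{Expected main obstacle.} The delicate point is the equivalence ``$\langle u,v\rangle=0$ for all $v\in\Sigma(\hat{\mathbb C}^n\setminus D)$ $\iff$ $u\in\overline{H^1(D)\cap{\mathcal O}(D)}$''. The easy inclusion is Stokes' formula combined with the vanishing $M^+_{\partial D}{\rm t}^+v\equiv 0$ from Corollary \ref{c.Sigma}. The reverse inclusion is where one must be careful: I would argue that if $u_0\notin\overline{H^1(D)\cap{\mathcal O}(D)}$, Hahn--Banach yields a measure $\mu$ as above separating $u_0$ from that closure, and then the surjectivity machinery of \S\ref{s.proof}—specifically the identity $f(u)=-\lim_\nu h_D(u_\nu,v)$ with $v=\tilde{\mathcal P}_D\Pi_D{\mathcal P}_D\hat v$, extended from $H^1(D)\cap{\mathcal O}(D)$ to ${\mathcal O}(D)$ by continuity of the pairing—produces $\langle u_0,v\rangle=f(u_0)\ne 0$. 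One must double-check that this $v$ is genuinely nonzero and genuinely in $\Sigma$, but that is exactly what the surjectivity part of the proof of Theorem \ref{t.dual.compl.over.G} already established, so no new estimate is needed; the only real care is bookkeeping which steps of that proof are independent of the density hypothesis.
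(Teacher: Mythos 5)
Your proposal isolates the right ingredients but never assembles them into the paper's (short) argument, and both of your routes contain steps that are actually false. The paper's proof of necessity is this: let $F\in({\mathcal O}(D))^*$ vanish on $H^1(D)\cap{\mathcal O}(D)$; by the \emph{assumed} surjectivity of \eqref{eq.mapping.compl} write $F=f_v$ with $v\in\Sigma(\hat{\mathbb C}^n\setminus D)$; by Corollary \ref{c.Sigma} the function ${\mathcal P}_Dv$ belongs to $H^1(D)\cap{\mathcal O}(D)$, so $\langle {\mathcal P}_Dv,v\rangle=0$, and the energy identity from the injectivity part yields $\sum_j\|\overline\partial_jv\|^2_{L^2({\mathbb C}^n\setminus\overline D)}=0$, whence $v=0$ by Hartogs and Liouville, so $F=0$; Hahn--Banach then gives density. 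You do state the key strengthened injectivity (``$f_v$ vanishing on $H^1(D)\cap{\mathcal O}(D)$ forces $v=0$''), but you never couple it with the hypothesis that \eqref{eq.mapping.compl} is onto, which is precisely what lets one represent the Hahn--Banach functional as some $f_v$.

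Concretely: in your first route you claim that feeding $f$ through the surjectivity construction gives $f_v=f$ on all of ${\mathcal O}(D)$ and that ``surjectivity of \eqref{eq.mapping.compl} is automatic''. Without density the construction only gives $f(u)=\langle u,v\rangle$ for $u\in H^1(D)\cap{\mathcal O}(D)$, because the passage $f(u)=\lim_\nu f(u_\nu)$ requires approximants $u_\nu\to u$; and since injectivity always holds, automatic surjectivity would make the ``only if'' direction of the corollary empty. The sentence ``$f_v\ne f$ on ${\mathcal O}(D)$ contradicts $f_v$ being well-defined'' is a non sequitur. In your ``sharper route'' the asserted equivalence $\bigl\{u:\langle u,v\rangle=0\ \text{for all}\ v\in\Sigma\bigr\}=\overline{H^1(D)\cap{\mathcal O}(D)}$ fails already in the direction you call easy: for any $0\ne v\in\Sigma$ one has ${\mathcal P}_Dv\in H^1(D)\cap{\mathcal O}(D)$ and $\langle{\mathcal P}_Dv,v\rangle=\pm\sum_j\|\overline\partial_jv\|^2\ne 0$, so $\Sigma$ does not annihilate $\overline{H^1(D)\cap{\mathcal O}(D)}$; and even a correct separation statement would not yield surjectivity. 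Replacing all of this by the three-line argument above closes the gap.
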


\begin{proof}
Taking into the account Theorem \ref{t.dual.compl.over.G}, we are to prove 
that $H^1 (D) \cap {\mathcal O}(D) $ is dense in ${\mathcal O}(D) $.
With this purpose, let $F$ be a continuous linear functional on ${\mathcal O}(D) $ vanishing on
$H^1 (D) \cap {\mathcal O}(D) $. By the Hahn-Banach theorem, our statement will be proved once
we show that $F\equiv 0$. 
By assumption, there is a $v \in \Sigma (\hat {\mathbb C}^n \setminus D)$
such that $\langle v, u\rangle =0$ for all $u \in H^1 (D) \cap {\mathcal O}(D) $.
 It follows from Corollary \ref{c.Sigma} that ${\mathcal P}_D v \in 
H^1 (D) \cap {\mathcal O}(D)$ and hence 
$\langle v, {\mathcal P}_D v\rangle =0$. 
Thus, an argument similar to that in the proof of the injectivity 
in Theorem \ref{t.dual.compl.over.G} shows that $v = 0$ 
in $D$. Hence $F= 0$, as desired. 
\end{proof}

\smallskip

\textit{Acknowledgments\,} The work was supported by Krasnoyarsk Mathematical Center
funded by the Ministry of Education and Science of the Russian Federation 
(Agreement 075-02-2022-876).

\end{document}